\newcounter{stepctr}
{\end{list}}
\newtheorem{thm}{Theorem}[section]
\newtheorem{prop}[thm]{Proposition}
\newtheorem{cor}[thm]{Corollary}
\theoremstyle{definition}
\newtheorem{dfn}[thm]{Definition}
\newtheorem{ex}[thm]{Example}
\newtheorem{rema}[thm]{Remark}
\newtheorem{lem}[thm]{Lemma}
\newtheorem{prob*}{Open problem}
\newcommand{\demo}{\begin{proof}}
\newcommand{\NN}{\mathcal{N}}
\newcommand{\R}{\ensuremath{\mathcal{R}}}
\newcommand{\N}{\mathbb{N}}
\def\ll^2{{\mathcal L}(\ell^2(\N))}
\def\f^0x{{\mathcal F^0}(X) }
\title
{\bf  On the class  $(W_{e})$-operators}
\author{   Zakariae Aznay, Hassan Zariouh }
\date{}
\begin{document}

\maketitle \thispagestyle{empty}

\begin{abstract}\noindent\baselineskip=10pt
It is well known that an hyponormal operator satisfies  Weyl's theorem. A result due to Conway shows that  the essential spectrum  of a normal operator $N$   consists precisely of all points in its spectrum except the isolated eigenvalues of finite multiplicity, that's $\sigma_{e}(N)=\sigma(N)\setminus E^0(N).$ In this paper, we define and study a new class  named  $(W_{e})$ of operators satisfying $\sigma_{e}(T)=\sigma(T)\setminus E^0(T),$ as a subclass of $(W).$  A countrexample shows  generally that an hyponormal  does not belong   to the class $(W_{e}),$    and we give an additional hypothesis under which an hyponormal  belongs to the class  $(W_{e}).$     We also give  the generalisation class $(gW_{e})$ in the contexte of B-Fredholm theory, and  we  characterize      $(B_{e}),$ as a subclass of $(B),$     in terms of localized SVEP.

\end{abstract}

 \baselineskip=15pt
 \footnotetext{\small \noindent  2010 AMS subject
classification: Primary 47A53, 47A10, 47A11 \\
\noindent Keywords: $(W_{e})$-operators, $(B_{e})$-operators} \baselineskip=15pt

\section{Introduction and preliminaries}

Let $X$ denote an infinite dimensional complex Banach space, and
denote by $L(X)$ the algebra of all bounded linear operators on $X.$
For $T\in L(X),$ we  denote by $T^*,$ $\alpha(T)$  and   $\beta(T)$ the dual of $T,$ the dimension of the
kernel $\mathcal{N}(T)$  and  the codimension of the range $\R(T),$ respectively.
By $\sigma (T),$ $\sigma_a(T)$ and  $\sigma_s(T),$  we denote the spectrum,  
 the approximate spectrum  and the surjectivity spectrum  of $T,$ 
respectively.\\
 Recall that $T$ is said to be \textit{upper semi-Fredholm}, if $\R(T)$ is
closed and $\alpha(T) <\infty,$ while $T$ is called \textit{lower
semi-Fredholm}, if   $\beta(T) < \infty.$ $T\in
L(X)$ is said to be \textit{semi-Fredholm} if $T$ is either an upper
semi-Fredholm or a lower semi-Fredholm operator. $T$ is
\textit{Fredholm} if $T$ is upper semi-Fredholm and lower
semi-Fredholm. If $T$ is semi-Fredholm then the index of $T$ is
defined by $\mbox{ind}(T)=\alpha(T) -\beta(T).$ For an operator $T \in
L(X),$ the \textit{ascent} $p(T)$ and the \textit{descent} $q(T)$
are defined by $p(T) = \inf\{n\in \mathbb{N}: \mathcal{N}(T^n) = \mathcal{N}(T^{n+1})\}$
 and $q(T)= \inf\{n\in \mathbb{N}: \R(T^n) = \R(T^{n+1})\},$
 respectively; the
infimum over the empty set is taken $\infty.$ If the ascent and the
descent of $T$  are both finite, then $p(T) = q(T)=p,$  and $\R(T^p)$ is closed. An operator $T$ is said to be \textit{Weyl} if it is
Fredholm of index zero, and is said to be 
 \textit{upper semi-Browder} if it is an upper
semi-Fredholm operator with finite ascent and it is called
\textit{Browder} if it is Fredholm of finite
ascent and descent. \\
If $T\in L(X)$ and $ n \in \mathbb{N},$
 we denote  by $T_{[n]}$ the restriction
of  $T$ on $\R(T^n).$ $T$ is said to be \textit{semi-B-Fredholm}  if there
exists $n\in \mathbb{N}$ such that
$\R(T^n) $ is closed and $T_{[n]}:
\R(T^n)\rightarrow \R(T^n)$ is semi-Fredholm. A B-Fredholm (resp., B-Weyl) operator is similarly defined.

 We recall that a complex number $\lambda\in\sigma (T)$ is a
\textit{pole} of the resolvent of $T,$ if $T-\lambda I$ has finite
ascent and finite descent, and $\lambda\in\sigma_a(T)$ is a
\textit{left pole} of $T$ if $p=p(T-\lambda I) <\infty$ and
$\R(T^{p+1})$ is closed.\\
In the following  list, we summarize  the notations and symbols
needed later.

\smallskip
\noindent $\mbox{iso}\,A$: isolated points in a given subset A of $ \mathbb{C},$\\
\noindent $\mbox{acc}\,A$: accumulation points of a given subset A of $ \mathbb{C},$\\
 \noindent $A^C$: the complementary of a subset $A\subset \mathbb{C},$\\
 \noindent $(W)$: the class of operators satisfying Weyl's theorem [$T\in(W)$ if $\Delta(T)=E^0(T)$],\\
\noindent $(gW)$: the class of operators satisfying generalized Weyl's theorem [$T\in(gW)$ if $\Delta^g(T)=E(T)$],\\
\noindent $(B)$: the class of operators satisfying Browder's theorem [$T\in(B)$ if $\Delta(T)=\Pi^0(T)$],\\
\noindent $(gB)$: the class of operators satisfying generalized Browder's theorem [$T\in(gB)$ if $\Delta^g(T)=\Pi(T)$],\\
\noindent $(aB)$: the class of operators satisfying a-Browder's theorem [$T\in(aB)$ if $\Delta_{a}(T)=\Pi_a^0(T)$].\\

 \begin{tabular}{l|l}
 $\sigma_{b}(T)$:  Browder spectrum of $T$ &  $\Delta(T):=\sigma(T)\setminus\sigma_{w}(T)$\\
 $\sigma_{ub}(T)$:  upper semi-Browder spectrum of $T$ & $\Delta^g(T):=\sigma(T)\setminus\sigma_{bw}(T)$\\
 $\sigma_{w}(T)$: Weyl spectrum of $T$ & $\Delta_{e}(T):=\sigma(T)\setminus\sigma_{e}(T)$\\
 $\sigma_{uw}(T)$: upper semi-Weyl spectrum of $T$ & $\Delta_{e}^g(T):=\sigma(T)\setminus\sigma_{bw}(T)$\\
 $\sigma_{e}(T)$: essential spectrum of $T$ &  $\Delta_{a}(T):=\sigma_{a}(T)\setminus\sigma_{uw}(T)$\\
 $ \sigma_{d}(T)$:  Drazin spectrum of $T$ &  $\Pi^0(T)$: poles of $T$ of finite rank\\
  $\sigma_{ld}(T)$: left Drazin spectrum of $T$ &  $\Pi(T)$: poles of $T$\\
$\sigma_{bw}(T)$: B-Weyl spectrum of $T$ &  $\Pi_a^0(T)$: left  poles of $T$ of finite rank\\
$\sigma_{bf}(T)$: B-Fredholm  spectrum of $T$ &  $E^0(T):=\mbox{iso}\,\sigma(T)\cap\sigma_{p}^0(T)$\\
  $\sigma_{p}(T)$:  eigenvalues of $T$  & $E(T):=\mbox{iso}\,\sigma(T)\cap\sigma_{p}(T)$\\
 $\sigma_{p}^0(T)$: eigenvalues of $T$ of finite multiplicity    &  $\sigma_{uf}(T)$: upper semi-Fredholm spectrum of $T$\\
 $\sigma_{rd}(T)$:  right Drazin spectrum of $T$    &  $\sigma_{lf}(T)$: lower semi-Fredholm spectrum of $T$\\
 $\sigma_{ubf}(T)$: upper semi-B-Fredholm spectrum of $T$    &  $\sigma_{lbf}(T)$: lower semi-B-Fredholm spectrum of $T$\\
 $\sigma_{ubw}(T)$: upper semi-B-Weyl spectrum of $T$    &  $\sigma_{lbw}(T)$: lower semi-B-Weyl spectrum of $T$ \\
   
\end{tabular}\\

For more details on several classes and spectra originating from Fredholm theory or B-Fredholm, we refer the reader to \cite{aiena1, Berkani-koliha}.

For the sake of completeness, we include the following  corollaries as straightforward applications of \cite[Theorem 4.7]{Grabiner}. For definitions and properties of operators with topological uniform descent, we refer the reader to the paper of Grabiner \cite{Grabiner}.

 \begin{cor}\label{corollary0}{\rm\cite[Corollary 4.8]{Grabiner}}  Suppose that $T\in L(X)$ is a bounded linear operator with topological uniform descent for $n\geq d,$ and that $V$ is a bounded  linear operator commuting with $T$. If $V-T$ is invertible and sufficiently small, then\\
 (i) $V$ has infinite ascent or descent if and only if $T$ does.\\
 (ii) $V$ cannot have finite non-zero ascent or descent.\\
 (iii) $V$ is onto if and only if $T$ has finite descent.\\
 (iv) $V$ is one-to-one (or bounded below) if and only if $T$ has finite ascent.\\
 (v) $V$ is invertible if and only if $0$ is a pole of $T.$\\
 (vi) $V$ is upper semi-Fredholm if and only if some $\frac{\mathcal{N}(T^{n+1})}{\mathcal{N}(T^{n})}$ is finite-dimensional.\\
 (vii) $V$ is lower semi-Fredholm if and only if some $\frac{\R(T^{n})}{\R(T^{n+1})}$ is finite-dimensional.
 \end{cor}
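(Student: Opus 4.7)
The plan is to deduce each of the seven items directly from \cite[Theorem 4.7]{Grabiner}. Under the present hypotheses, that theorem yields, for every $n\geq d,$ the identifications
\[
\dim\mathcal{N}(V) \;=\; \dim\frac{\mathcal{N}(T^{n+1})}{\mathcal{N}(T^n)}, \qquad \dim\frac{X}{\R(V)} \;=\; \dim\frac{\R(T^n)}{\R(T^{n+1})},
\]
together with the fact that $\R(V)$ is automatically closed. The seven equivalences will then follow by pairing these identifications with the standard characterizations of ascent, descent, and (semi-)Fredholmness.

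For (vi) and (vii), this should be immediate: $V$ is upper (resp.\ lower) semi-Fredholm iff $\dim\mathcal{N}(V)<\infty$ (resp.\ $\dim X/\R(V)<\infty$), and the two displayed equalities transfer this to the finiteness of the corresponding asymptotic quotients of $T.$ For (iii) and (iv), I would argue that $V$ is onto iff $X/\R(V)=\{0\},$ which via Grabiner translates to $\R(T^n)=\R(T^{n+1})$ from some stage onward, i.e.\ $q(T)<\infty;$ dually, $V$ bounded below translates to $p(T)<\infty.$ Item (v) is then obtained as a conjunction of (iii) and (iv), since $0$ is a pole of $T$ exactly when $T$ has both finite ascent and finite descent.

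For (i) and (ii), my plan is to apply the preceding identifications to the iterated operators: each power $T^k$ still has topological uniform descent (for a possibly larger threshold), $V^k$ commutes with $T^k,$ and $V^k-T^k$ remains invertible and small provided $V-T$ was chosen small enough. Items (iii) and (iv) applied at level $k$ should then show that, as soon as $T$ has finite ascent (resp.\ descent), $V^k$ is bounded below (resp.\ onto) for every $k\geq 1,$ which forces $p(V)=0$ (resp.\ $q(V)=0$). This will rule out any intermediate finite nonzero value and simultaneously yield (i) and (ii).

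The step I expect to be the main obstacle is (ii), which is a rigidity statement rather than a direct translation of numerical data: one has to exclude the intermediate possibility $0<p(V)<\infty$ or $0<q(V)<\infty.$ The power trick described above is, I believe, the cleanest way to close this gap; the remainder of the corollary should then be essentially bookkeeping once the two Grabiner identifications are in place.
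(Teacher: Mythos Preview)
Note first that the paper does not give its own proof of this statement: it is quoted verbatim from \cite[Corollary 4.8]{Grabiner} and included ``for the sake of completeness'' as a consequence of \cite[Theorem 4.7]{Grabiner}. So the only benchmark is Grabiner's original argument.

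Your reduction of (iii)--(vii) to the two displayed identifications is correct and is precisely the intended mechanism. The problem lies in your treatment of (i) and (ii). The ``power trick'' does not go through: $V^{k}-T^{k}$ is in general \emph{not} invertible, even when $V-T$ is invertible and arbitrarily small. Indeed, since $V$ and $T$ commute one has $V^{k}-T^{k}=(V-T)\sum_{j=0}^{k-1}V^{j}T^{\,k-1-j}$, and the second factor is close to $kT^{k-1}$, which has no reason to be invertible. Concretely, with $T=R$ the unilateral shift (which has topological uniform descent for $n\geq0$) and $V=R-\lambda I$ with $0<|\lambda|$ small, one gets $V^{2}-T^{2}=-\lambda(2R-\lambda I)$, and $\lambda/2\in\sigma(R)=D(0,1)$, so $V^{2}-T^{2}$ is not invertible. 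Hence you cannot feed the pair $(T^{k},V^{k})$ back into Theorem 4.7. There is a second, logical gap: even granting the trick, your argument only treats the case $p(T)<\infty$ (concluding $p(V)=0$); when $p(T)=\infty$ you still need to exclude $0<p(V)<\infty$, and nothing you wrote addresses that case.

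The actual route, and the reason Grabiner can state (i)--(ii) as a corollary, is that \cite[Theorem 4.7]{Grabiner} gives more than the two equalities you displayed: it shows that $V$ itself has topological uniform descent for $n\geq0$, with the same stable quotient dimensions as $T$. In particular $\dim\mathcal{N}(V^{m+1})/\mathcal{N}(V^{m})$ and $\dim\R(V^{m})/\R(V^{m+1})$ are constant in $m\geq0$ and equal to the corresponding stable quantities for $T$. Item (ii) is then immediate: if $p(V)<\infty$ the kernel quotients of $V$ are eventually $0$, hence identically $0$, so $p(V)=0$; similarly for $q(V)$. Item (i) follows by combining (ii) with (iii) and (iv). So you should extract this stronger conclusion from Theorem 4.7 rather than attempt to iterate.
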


 \begin{cor}\label{corollary1}{\rm\cite[Corollary 3.2]{berkani-sarih}} Suppose that $T\in L(X)$ is a bounded linear with topological uniform descent for $n\geq d,$ and that $V$ is a bounded linear operator commuting with $T$. If $V-T$ is invertible and sufficiently small, then the following hold.\\
 (i) If $\dim(\mathcal{N}(T)\cap\R(T^d))<\infty,$  then $V$  is an upper semi-Fredholm  and $\alpha(V)=\dim(\mathcal{N}(T)\cap\R(T^d)).$\\
 (ii) If ${\rm codim}(\R(T)+\mathcal{N}(T^d))<\infty,$ then $V$  is a lower semi-Fredholm   and $\beta(V)={\rm codim}(\R(T)+\mathcal{N}(T^d)).$
 \end{cor}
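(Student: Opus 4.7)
The plan is to derive both statements as quantitative refinements of the qualitative assertions $(vi)$ and $(vii)$ of Corollary~\ref{corollary0}, using the stabilization phenomena that are built into the definition of topological uniform descent.

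For part (i), I would first recall the canonical isomorphism
\[
\mathcal{N}(T^{n+1})/\mathcal{N}(T^{n}) \;\cong\; \mathcal{N}(T)\cap \R(T^{n}),
\]
implemented by $x+\mathcal{N}(T^{n})\mapsto T^{n}x$ (injectivity and surjectivity are immediate). Under the hypothesis that $T$ has topological uniform descent for $n\geq d$, the subspaces $\mathcal{N}(T)\cap \R(T^{n})$ are constant in $n\geq d$, and equal to $\mathcal{N}(T)\cap \R(T^{d})$; so in particular
\[
\mathcal{N}(T^{n+1})/\mathcal{N}(T^{n}) \;\cong\; \mathcal{N}(T)\cap \R(T^{d})\qquad (n\geq d).
\]
If $\dim(\mathcal{N}(T)\cap \R(T^{d}))<\infty$, Corollary~\ref{corollary0}(vi) immediately yields that $V$ is upper semi-Fredholm. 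For the exact dimension count I would invoke the quantitative content of \cite[Theorem 4.7]{Grabiner}, which identifies $\alpha(V)$ with the eventual dimension of $\mathcal{N}(T^{n+1})/\mathcal{N}(T^{n})$; via the above isomorphism this reads $\alpha(V)=\dim(\mathcal{N}(T)\cap \R(T^{d}))$.

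Part (ii) is the dual statement. I would use the analogous canonical isomorphism
\[
\R(T^{n})/\R(T^{n+1}) \;\cong\; X/(\R(T)+\mathcal{N}(T^{n})),
\]
obtained by letting $T^{n}$ descend to the quotients, together with the corresponding stabilization $\R(T)+\mathcal{N}(T^{n}) = \R(T)+\mathcal{N}(T^{d})$ for $n\geq d$ furnished by topological uniform descent. The finite-codimension hypothesis then feeds into Corollary~\ref{corollary0}(vii) to give the lower semi-Fredholm conclusion, while the quantitative part of \cite[Theorem 4.7]{Grabiner} produces the identity $\beta(V)=\mathrm{codim}(\R(T)+\mathcal{N}(T^{d}))$.

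The main obstacle is not conceptual but a matter of careful bookkeeping: one must confirm that the \emph{sufficiently small} threshold on $\|V-T\|$ can be chosen uniformly so that both the qualitative membership in the semi-Fredholm class and the exact dimension formula fall out of a single application of Grabiner's theorem, and that the stabilization of $\mathcal{N}(T)\cap \R(T^{n})$ and of $\R(T)+\mathcal{N}(T^{n})$ from $n=d$ onwards is genuinely part of the topological uniform descent hypothesis at level $d$. Once these standard facts are invoked, the proof is a direct transcription of Corollary~\ref{corollary0} through the two algebraic isomorphisms displayed above.
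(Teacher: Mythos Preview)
The paper does not supply its own proof of this statement: Corollary~\ref{corollary1} is quoted verbatim from \cite[Corollary 3.2]{berkani-sarih} and is presented, together with Corollary~\ref{corollary0}, merely as a ``straightforward application of \cite[Theorem 4.7]{Grabiner}.'' Your proposal is precisely such an application---you pass through the standard isomorphisms $\mathcal{N}(T^{n+1})/\mathcal{N}(T^{n})\cong \mathcal{N}(T)\cap\R(T^{n})$ and $\R(T^{n})/\R(T^{n+1})\cong X/(\R(T)+\mathcal{N}(T^{n}))$, use the stabilization for $n\geq d$ that comes with uniform descent, and then read off the semi-Fredholm conclusion and the exact dimension from Grabiner's theorem---so it matches what the paper indicates (and what \cite{berkani-sarih} actually does).

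One small point of bookkeeping worth tightening: the stabilization of $\mathcal{N}(T)\cap\R(T^{n})$ for $n\geq d$ is not part of the \emph{definition} of (topological) uniform descent, which is phrased in terms of $\R(T)+\mathcal{N}(T^{n})$; it is a consequence (Grabiner shows the two stabilizations are equivalent). Your sketch already flags this as something to be checked, so this is not a gap, just a place where you should cite the relevant lemma rather than the hypothesis directly.
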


\section{ On the class  $(B_{e})$-operators}

We begin this section by the following definition, in which we introduce  new class of operators named $(B_{e})$-operators,  as a proper subclass of the class $(B)$ of operators satisfying Browder's theorem (see Proposition \ref{propB} below).

\begin{dfn}\label{dfn1}A bounded linear operator $T\in L(X)$ is said to belong to the class $(B_{e})$ [$T\in (B_{e})$ for brevity] if  $\Delta_{e}(T)=\Pi^0(T)$ and is said to belong to the class
 $(gB_{e})$ [$T\in (gB_{e})$ for brevity] if $\Delta_{e}^g(T)=\Pi(T).$ In other words,  $T\in (B_{e})$ if and only if  $\sigma_{e}(T)=\sigma_{b}(T)$ and $T\in (gB_{e})$ if and only if $\sigma_{bf}(T)=\sigma_{d}(T).$
\end{dfn}

Before giving some examples of operators which illustrate the classes  $(B_{e})$ and $(gB_{e}),$ we recall the  property of SVEP which will be useful in everything that follows. A bounded linear
 operator $T\in L(X)$ is said to have the {\it single-valued
 extension property} (SVEP for short) at $\lambda\in\mathbb{C}$ if
  for every open neighborhood $U_\lambda$ of $\lambda,$ the  function $f\equiv 0$ is the only
 analytic solution of the equation
 $(T-\mu I)f(\mu)=0\quad\forall\mu\in U_\lambda.$ We denote by
  ${\mathcal S}(T)=\{\lambda\in\mathbb{C}: T\mbox{  does not have the SVEP at } \lambda\}$
     and we say that  $T$ has the  SVEP
  if
$ {\mathcal S}(T)=\emptyset.$ We say that $T$ has the SVEP on $A\subset\mathbb{C},$ if $T$ has the SVEP
at every $\lambda\in A.$ (For more details about this property, we refer the reader to \cite{aiena1}). Thus it follows easily  that $T\in L(X)$ has the  SVEP at every point of the boundary $\partial\sigma(T)$ of the spectrum $\sigma(T),$ and in particular, $T$ has the SVEP at every  isolated  point in
$\sigma(T).$ We also have from  \cite[Theorem 2.65]{aiena1},  \[p(T-\lambda_0 I)<\infty \Longrightarrow
\mbox{ T has the SVEP at } \lambda_0, \,\,\,(A)\] and dually
\[q(T-\lambda_0 I)<\infty \Longrightarrow \mbox{ $T^*$ has the SVEP
at } \lambda_0.\,\,\,(B)\]
 Furthermore, if $T-\lambda_0 I$ is
semi-B-Fredholm then the implications above are equivalences.

\begin{ex}\label{ex1}~~

\noindent 1. Hereafter, the Hilbert space $ l^2(\mathbb{N})$ is  denoted by $\l^2.$ We consider the operator $T$ defined on $l^2$ by $T(x_1,x_2,x_3,\ldots)=(\frac{x_2}{2},\frac{x_3}{3}, \frac{x_4}{4},\ldots).$ It is  easily seen that  $\sigma_{e}(T)=\sigma_{b}(T)=\{0\}$ and $\sigma_{bf}(T)=\sigma_{d}(T)=\{0\}.$ This means  that  $T\in (B_{e})\cap(gB_{e}).$\\
2. Here and elsewhere,  we denote by $R$ and $L$ respectively,   the  right shift and left shift  operators    defined on  $l^2$   by
$R(x_1,x_2,x_3,\ldots)=(0,x_1,x_2,x_3,\ldots),$ and  $L(x_1,x_2,x_3,\ldots)=(x_2,x_3,\ldots).$  It is easy to get    $\sigma_{uf}(R)=\sigma_{e}(R)=C(0, 1)$ and $\sigma_{b}(R)=\sigma_{d}(R)=D(0, 1);$ where $C(0, 1)$ and $D(0, 1)$ are respectively,   the unit circle  and the closed unit disc of  $ \mathbb{C}.$ So $R\not\in (B_{e}).$ Moreover,   as  $\sigma_{ubf}(R)\subset\sigma_{bf}(R)\subset\sigma_{e}(R)$ and   $R$ has no eigenvalues, then  $\sigma_{ubf}(R)=C(0, 1).$ So   $\sigma_{bf}(R)=C(0, 1)$  and   then  $R\not\in (gB_{e}).$\\
3. Let  $T \in L(X)$ be an operator with finite accumulation points of its spectrum.  Then   $T\in(B_{e})\cap (gB_{e}).$   To see this, as ${\mathcal S}(T\oplus T^*)\subset \mbox{acc}\,\sigma(T)$  and since    ${\mathcal S}(T\oplus T^{*})$ is an  open set of $\mathbb{C},$  it follows that ${\mathcal S}(T\oplus T^{*})=\emptyset.$ On the other hand, it is easy to get  $\sigma_{d}(T)=\sigma_{bf}(T)\cup  {\mathcal S}(T\oplus T^{*}),$  this implies that   $T \in (gB_{e}).$ Furthermore, since  we always  have    $\sigma_{b}(T)= \sigma_{e}(T)\cup \sigma_{d}(T),$ then $T \in (B_{e}).$ In particular, if $T$ is a meromorphic then $T\in (B_{e})\cap(gB_{e}).$ We recall that an operator $T \in L(X)$ is called  meromorphic if $\sigma_{d}(T)\subset\{0\}$ and    note that for this type of operators, we have then $\mbox{acc}\,\sigma(T)\subset\{0\}.$
\end{ex}

\begin{rema}\label{rema0}
 It is well known that  $\sigma_{e}(T)=\sigma_{e}(T^*),$  $\sigma_{b}(T ) = \sigma_{b}(T^*),$ $\sigma_{bf}(T ) = \sigma_{bf}(T^*)$  and $\sigma_{d}(T ) = \sigma_{d}(T^*).$ Thus $T\in (B_{e})$  [resp., $(gB_{e})$] if  and only if $T^*\in (B_{e})$ [resp., $(gB_{e})$].
\end{rema}

After examining the previous examples, it is natural to ask the following question: is there a   relation between the classes $(B_{e})$ and $(gB_{e})$? The following theorem  answer this question, in which we prove that the two classes coincide.

\begin{thm}\label{thm1} Let $T\in L(X).$ Then
  $T\in (B_{e})$ if and only if  $T\in (gB_{e}).$
\end{thm}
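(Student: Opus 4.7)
The plan is to prove the two implications by contradiction, using only the trivial containments $\sigma_e(T)\subset\sigma_b(T)$ and $\sigma_{bf}(T)\subset\sigma_d(T)$ together with the SVEP equivalences for semi-B-Fredholm operators recalled in the preliminaries.

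The direction $(gB_e)\Longrightarrow(B_e)$ is essentially bookkeeping and I would dispose of it first. Pick any $\lambda\in\sigma_b(T)\setminus\sigma_e(T)$; Fredholmness of $T-\lambda I$ implies B-Fredholmness, so the assumption $\sigma_{bf}(T)=\sigma_d(T)$ puts $\lambda$ outside $\sigma_d(T)$, giving $p(T-\lambda I)=q(T-\lambda I)<\infty$ and therefore Browderness of $T-\lambda I$, a contradiction.

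For the harder direction $(B_e)\Longrightarrow(gB_e)$, I would start with a putative $\lambda\in\sigma_d(T)\setminus\sigma_{bf}(T)$. The first key step is to apply Corollary~\ref{corollary0}(vi)--(vii) to the B-Fredholm operator $T-\lambda I$ with perturbation $V=T-\mu I$, to extract an $\epsilon>0$ such that $T-\mu I$ is Fredholm on the punctured disc $0<|\mu-\lambda|<\epsilon$; the hypothesis $\sigma_e(T)=\sigma_b(T)$ then upgrades this to Browderness throughout that disc. Implications $(A)$ and $(B)$ next yield $\mu\notin\mathcal{S}(T)\cup\mathcal{S}(T^{*})$ for every such $\mu$. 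Finally, since $\mathcal{S}(T)$ and $\mathcal{S}(T^{*})$ are open subsets of $\mathbb{C}$ (as recalled in Example~\ref{ex1}(3)), neither can contain $\lambda$ while missing a punctured neighborhood of it; so $T$ and $T^{*}$ both have SVEP at $\lambda$, which for the semi-B-Fredholm operator $T-\lambda I$ forces $p(T-\lambda I)<\infty$ and $q(T-\lambda I)<\infty$ by the equivalences stated just after $(A), (B)$. This contradicts $\lambda\in\sigma_d(T)$. The main obstacle is precisely the first step—extracting a punctured neighborhood of Fredholm points from mere B-Fredholmness at $\lambda$—which is exactly what Corollary~\ref{corollary0} is designed to furnish; after that, the proof is a short formal chase through SVEP and the openness of $\mathcal{S}(T),\mathcal{S}(T^{*})$.
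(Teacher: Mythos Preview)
Your proof is correct and shares the paper's overall architecture: for $(gB_e)\Rightarrow(B_e)$ the paper packages your argument via the identity $\sigma_b(T)=\sigma_e(T)\cup\sigma_d(T)$, and for $(B_e)\Rightarrow(gB_e)$ both proofs start from $\lambda\notin\sigma_{bf}(T)$, invoke the punctured-neighborhood theorem to get Fredholm (hence, by hypothesis, Browder) points on a deleted disc about $\lambda$, and then deduce $\lambda\notin\sigma_d(T)$. The only difference is in that last deduction. The paper applies Corollary~\ref{corollary0} directly: since nearby $T-\mu I$ have finite ascent and descent, so does $T-\lambda I$. Your route through SVEP---Browder on the punctured disc $\Rightarrow$ $T$ and $T^*$ have SVEP there $\Rightarrow$ (openness of $\mathcal{S}(T),\mathcal{S}(T^*)$) SVEP at $\lambda$ $\Rightarrow$ (B-Fredholm equivalences after $(A),(B)$) finite ascent and descent---is valid but is a detour that re-derives precisely what Corollary~\ref{corollary0}(i) already hands you in one line.
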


\begin{proof}   Suppose that $T\in (gB_{e}).$  Since   the equality  $\sigma_{b}(T)= \sigma_{e}(T)\cup \sigma_{d}(T)$ is always true,
then $\sigma_{b}(T)= \sigma_{e}(T)\cup \sigma_{bf}(T)=\sigma_{e}(T).$ So $T\in(B_{e}).$

Conversely, assume that $T\in (B_{e})$ and  let $\lambda\not\in\sigma_{bf}(T)$ be arbitrary. Since $T-\lambda I$ is in particular of topological uniform descent, from the punctured
neighborhood theorem for semi-B-Fredholm operators--Corollary \ref{corollary1}, there exists $\epsilon>0$ such that $D(\lambda, \epsilon)\setminus\{\lambda\}\subset(\sigma_{e}(T))^C=(\sigma_{b}(T))^C.$ By Corollary \ref{corollary0}, we conclude that $\lambda\not\in\sigma_{d}(T).$ Hence  $\sigma_{d}(T)\subset\sigma_{bf}(T)$ and  then $\sigma_{d}(T)=\sigma_{bf}(T).$ This completes the proof.
\end{proof}

The next proposition gives some characterizations of the class $(B_{e})$-operators.

\begin{prop}\label{prop1}
Let $T \in L(X).$ The following statements are
equivalent.\\
(i)  $T\in (B_{e});$\\
(ii)  $\Delta_{e}(T)\subset \mbox{\rm iso}\,\sigma(T)$ {\rm[equivalently  $\mbox{acc}\,\sigma(T)\subseteq \sigma_{e}(T)];$} \\
(iii) $\sigma(T)=\sigma_{e}(T)\cup\mbox{\rm iso}\,\sigma(T);$\\
(iv)  $\Delta_{e}^g(T)\subset \mbox{\rm iso}\,\sigma(T)$ {\rm[equivalently  $\mbox{acc}\,\sigma(T)\subseteq \sigma_{bf}(T)];$} \\
(v) $\sigma(T)=\sigma_{bf}(T)\cup\mbox{\rm iso}\,\sigma(T).$
\end{prop}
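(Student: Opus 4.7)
The plan is to prove the equivalences by establishing the chain (i)$\Leftrightarrow$(ii)$\Leftrightarrow$(iii) and, in parallel, (iv)$\Leftrightarrow$(v), then linking the two halves through Theorem \ref{thm1}.

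First I would dispose of the purely set-theoretic equivalences (ii)$\Leftrightarrow$(iii) and (iv)$\Leftrightarrow$(v), together with the parenthetical reformulations inside (ii) and (iv). Since $\sigma(T)=\mbox{iso}\,\sigma(T)\cup\mbox{acc}\,\sigma(T)$ disjointly and $\sigma_{e}(T)\subset\sigma(T)$, the inclusion $\Delta_{e}(T)\subset\mbox{iso}\,\sigma(T)$ says exactly that every $\lambda\in\sigma(T)$ is either isolated or lies in $\sigma_{e}(T)$; taking complements within $\sigma(T)$ rewrites this as $\mbox{acc}\,\sigma(T)\subset\sigma_{e}(T)$, and adjoining $\sigma_{e}(T)$ yields $\sigma(T)=\sigma_{e}(T)\cup\mbox{iso}\,\sigma(T)$. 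The identical manipulation delivers (iv)$\Leftrightarrow$(v).

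Next I would prove (i)$\Leftrightarrow$(iii). The direction (i)$\Rightarrow$(iii) is immediate: if $\sigma_{e}(T)=\sigma_{b}(T)$, then every $\lambda\in\sigma(T)\setminus\sigma_{e}(T)=\sigma(T)\setminus\sigma_{b}(T)$ is a Browder point, hence a pole, and therefore isolated in $\sigma(T)$. For (ii)$\Rightarrow$(i), fix $\lambda\in\Delta_{e}(T)$: then $T-\lambda I$ is Fredholm and, by hypothesis, $\lambda\in\mbox{iso}\,\sigma(T)$. Since isolated points of $\sigma(T)=\sigma(T^{*})$ carry SVEP for both $T$ and $T^{*}$, the equivalences of $(A)$ and $(B)$ recalled in the preliminaries for semi-B-Fredholm operators force $p(T-\lambda I)<\infty$ and $q(T-\lambda I)<\infty$. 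Combined with Fredholmness this makes $T-\lambda I$ Browder, so $\lambda\notin\sigma_{b}(T)$. Hence $\sigma_{b}(T)\subset\sigma_{e}(T)$, and together with the automatic reverse inclusion we obtain $T\in(B_{e})$.

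It remains to link the two halves. By Theorem \ref{thm1}, statement (i) is equivalent to $T\in(gB_{e})$, i.e.\ $\sigma_{bf}(T)=\sigma_{d}(T)$, and this trivially implies (v) because $\sigma(T)\setminus\sigma_{bf}(T)=\sigma(T)\setminus\sigma_{d}(T)\subset\Pi(T)\subset\mbox{iso}\,\sigma(T)$. Conversely, from (iv) I would repeat the SVEP argument above, now with $T-\lambda I$ semi-B-Fredholm in place of semi-Fredholm, to conclude that every $\lambda\in\sigma(T)\setminus\sigma_{bf}(T)$ satisfies $p(T-\lambda I)=q(T-\lambda I)<\infty$ and hence $\lambda\notin\sigma_{d}(T)$; this gives $T\in(gB_{e})$, which by Theorem \ref{thm1} is (i). The main obstacle in the whole argument is precisely this SVEP step: upgrading (B-)Fredholmness to Browder-ness (resp.\ Drazin-ness) via isolation of $\lambda$ requires the non-trivial equivalence, valid for semi-B-Fredholm operators, between SVEP of $T$ (resp.\ $T^{*}$) at $\lambda$ and finite ascent (resp.\ descent) of $T-\lambda I$; the rest of the proof is set-theoretic bookkeeping.
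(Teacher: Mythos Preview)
Your proof is correct and follows essentially the same architecture as the paper: establish (i)$\Leftrightarrow$(ii)$\Leftrightarrow$(iii), separately (iv)$\Leftrightarrow$(v)$\Leftrightarrow T\in(gB_e)$, and then bridge the two blocks via Theorem~\ref{thm1}. The only cosmetic difference is that for (ii)$\Rightarrow$(i) the paper invokes directly the classical identity $\mbox{iso}\,\sigma(T)\cap(\sigma_e(T))^C=\Pi^0(T)$ (and its B-Fredholm analogue $\mbox{iso}\,\sigma(T)\cap(\sigma_{bf}(T))^C=\Pi(T)$), whereas you supply a self-contained justification of that identity through SVEP at isolated points and the implications $(A)$, $(B)$; this is the same fact, just unpacked.
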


\begin{proof}
(i) $\Longleftrightarrow$ (ii) 	The direct implication is clair. Conversely, assume that  $\Delta_{e}(T)\subset \mbox{iso}\,\sigma(T),$  then  $\Delta_{e}(T)= \mbox{iso}\,\sigma(T)\cap(\sigma_{e}(T))^C=\Pi^0(T).$ Thus $T\in (B_{e}).$\\
(ii) $\Longleftrightarrow$ (iii) Assume that $\Delta_{e}(T)\subset\mbox{iso}\,\sigma(T).$ Then  $\sigma(T)=\Delta_{e}(T)\cup\sigma_{e}(T)\subset \mbox{iso}\,\sigma(T) \cup \sigma_{e}(T).$ As the inclusion $\sigma(T)\supset \mbox{iso}\,\sigma(T) \cup \sigma_{e}(T)$ is always true, then   $\sigma(T)=\sigma_{e}(T)\cup\mbox{\rm iso}\,\sigma(T).$ The reverse implication is obvious. In the same way, we show  that (iv) $\Longleftrightarrow$ (v) $\Longleftrightarrow T\in (gB_{e}).$  From Theorem \ref{thm1}, we conclude that the five statements are equivalent.
\end{proof}

\begin{prop}\label{propB}Let $T\in L(X).$ The following statements hold.\\
(i)  $T\in (B_{e});$\\
(ii)  $T\in (B)$ and  $\sigma_{e}(T)=\sigma_{w}(T);$\\
(iii)   $T\in (B)$ and  $\sigma_{bf}(T)=\sigma_{bw}(T).$
\end{prop}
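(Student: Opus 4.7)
The strategy is to read Proposition \ref{propB} as the assertion that the three conditions are equivalent, and to establish (i) $\Leftrightarrow$ (ii) and (i) $\Leftrightarrow$ (iii) by sandwiching the relevant spectra. The underlying tools are the standard inclusions
\[
\sigma_{e}(T)\subseteq\sigma_{w}(T)\subseteq\sigma_{b}(T)\qquad\text{and}\qquad\sigma_{bf}(T)\subseteq\sigma_{bw}(T)\subseteq\sigma_{d}(T),
\]
together with Theorem \ref{thm1} (which gives $T\in (B_{e})\Longleftrightarrow \sigma_{bf}(T)=\sigma_{d}(T)$) and the well-known identifications $T\in (B)\Longleftrightarrow\sigma_{w}(T)=\sigma_{b}(T)\Longleftrightarrow\sigma_{bw}(T)=\sigma_{d}(T)$ (i.e.\ Browder's and generalized Browder's theorems are equivalent).

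For (i) $\Rightarrow$ (ii): by definition $T\in (B_{e})$ means $\sigma_{e}(T)=\sigma_{b}(T)$, and the sandwich $\sigma_{e}(T)\subseteq\sigma_{w}(T)\subseteq\sigma_{b}(T)$ forces all three spectra to coincide; hence $\sigma_{w}(T)=\sigma_{b}(T)$ (so $T\in (B)$) and $\sigma_{e}(T)=\sigma_{w}(T)$. The converse (ii) $\Rightarrow$ (i) is immediate: combining $\sigma_{w}(T)=\sigma_{b}(T)$ with $\sigma_{e}(T)=\sigma_{w}(T)$ yields $\sigma_{e}(T)=\sigma_{b}(T)$.

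For (i) $\Leftrightarrow$ (iii) I would run exactly the same sandwich argument in the B-Fredholm setting. By Theorem \ref{thm1}, $T\in (B_{e})$ is the same as $\sigma_{bf}(T)=\sigma_{d}(T)$, and the inclusions $\sigma_{bf}(T)\subseteq\sigma_{bw}(T)\subseteq\sigma_{d}(T)$ then collapse, giving both $\sigma_{bw}(T)=\sigma_{d}(T)$ (equivalently, $T\in (B)$ by generalized Browder) and $\sigma_{bf}(T)=\sigma_{bw}(T)$. Conversely, assuming (iii), the hypothesis $T\in (B)$ upgrades to $\sigma_{bw}(T)=\sigma_{d}(T)$, and combining with $\sigma_{bf}(T)=\sigma_{bw}(T)$ gives $\sigma_{bf}(T)=\sigma_{d}(T)$, hence $T\in(gB_{e})=(B_{e})$ by Theorem \ref{thm1}.

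The argument is essentially just spectral bookkeeping and has no real obstacle; the only point worth a short citation is the equivalence of Browder's and generalized Browder's theorems, used implicitly to convert $T\in(B)$ into $\sigma_{bw}(T)=\sigma_{d}(T)$ in the proof of (iii) $\Rightarrow$ (i). Everything else follows from the basic inclusions and from Theorem \ref{thm1}.
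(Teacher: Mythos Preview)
Your argument is correct; the paper actually omits the proof of this proposition entirely, treating the equivalences as immediate, and your sandwich arguments via $\sigma_{e}(T)\subseteq\sigma_{w}(T)\subseteq\sigma_{b}(T)$ and $\sigma_{bf}(T)\subseteq\sigma_{bw}(T)\subseteq\sigma_{d}(T)$ (together with Theorem~\ref{thm1} and the equivalence $(B)\Leftrightarrow(gB)$) are exactly the routine bookkeeping the authors have in mind. Note only that the paper subsequently uses this proposition together with Lemma~\ref{lem1} to give an \emph{alternative} proof of Theorem~\ref{thm1}, so for that purpose the (i)$\Leftrightarrow$(ii) part---which you prove without invoking Theorem~\ref{thm1}---is the crucial one; your use of Theorem~\ref{thm1} for (i)$\Leftrightarrow$(iii) is nonetheless legitimate since Theorem~\ref{thm1} is established independently earlier.
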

Remark that the class $(B)$ contains the class   $(B_{e})$ as a proper subclass. Indeed,   the right shift operator $R$  belongs to the class $(B),$ since $\sigma_{w}(R)=\sigma_{b}(R)=D(0, 1).$ But it is already mentioned that    $R\not\in(B_{e}).$ Note that here   $\sigma_{e}(R)=C(0, 1)$  and
$C(0, 1)=\sigma_{bf}(R) \neq\sigma_{bw}(R)=D(0, 1).$

To give the reader a precise  view of the subject, we present  another proof of Theorem \ref{thm1} as an immediate consequence  of  Proposition \ref{propB} and Lemma \ref{lem1} below.
\begin{lem}\label{lem1} The next two equivalences hold  for every  $T\in L(X).$ \\
(i)   $\sigma_{e}(T)=\sigma_{w}(T)\Longleftrightarrow \sigma_{bf}(T)=\sigma_{bw}(T).$\\
(ii) $\sigma_{w}(T)=\sigma_{b}(T) \Longleftrightarrow \sigma_{bw}(T)=\sigma_{d}(T).$
\end{lem}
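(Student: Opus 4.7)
The plan is to reduce each equivalence to proving a single non-trivial inclusion, using the always-true chains $\sigma_{e}(T)\subseteq\sigma_{w}(T)$, $\sigma_{bf}(T)\subseteq\sigma_{bw}(T)$, $\sigma_{w}(T)\subseteq\sigma_{b}(T)$ and $\sigma_{bw}(T)\subseteq\sigma_{d}(T)$, combined with the punctured-neighbourhood theorem for semi-B-Fredholm operators (Corollary \ref{corollary1}) and the coincidence of the B-Fredholm index with the usual Fredholm index whenever both make sense.

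For (i), assume first that $\sigma_{e}(T)=\sigma_{w}(T)$ and take $\lambda\notin\sigma_{bf}(T)$. Since $T-\lambda I$ is B-Fredholm it has topological uniform descent, so Corollary \ref{corollary1} provides a punctured disc $D(\lambda,\epsilon)\setminus\{\lambda\}$ on which every $T-\mu I$ is Fredholm with index equal to the B-Fredholm index of $T-\lambda I$. The hypothesis makes each $T-\mu I$ Weyl, i.e.\ of index $0$, and this common value transfers to $T-\lambda I$, so $\lambda\notin\sigma_{bw}(T)$. The converse is immediate: any $\lambda\notin\sigma_{e}(T)$ makes $T-\lambda I$ Fredholm, hence B-Fredholm, hence B-Weyl by hypothesis, and the agreement of the two indices then makes $T-\lambda I$ Weyl.

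For (ii), the easy direction assumes $\sigma_{bw}(T)=\sigma_{d}(T)$ and takes $\lambda\notin\sigma_{w}(T)$: then $T-\lambda I$ is Weyl, hence B-Weyl, so $\lambda\notin\sigma_{d}(T)$, forcing finite ascent and descent and promoting Weyl to Browder. For the converse, assume $\sigma_{w}(T)=\sigma_{b}(T)$ and take $\lambda\notin\sigma_{bw}(T)$. Then $T-\lambda I$ is B-Weyl and by Corollary \ref{corollary1} every $T-\mu I$ in a small punctured disc around $\lambda$ is Fredholm of index $0$, hence Weyl, and by hypothesis Browder. Here I would invoke Corollary \ref{corollary0}(ii) to rule out any finite non-zero ascent or descent for such a small perturbation; combined with Browderness this forces $\alpha(T-\mu I)=\beta(T-\mu I)=0$, i.e.\ invertibility of $T-\mu I$. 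Corollary \ref{corollary0}(v) then identifies $\lambda$ as a pole of $T$, giving $\lambda\notin\sigma_{d}(T)$.

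The main obstacle I anticipate is exactly this last upgrade in (ii): passing from ``Browder on a punctured neighbourhood'' to ``invertible on a punctured neighbourhood''. Parts (ii) and (v) of Corollary \ref{corollary0} are precisely what close this gap, since without them finite positive ascent or descent could persist on the punctured disc and block the pole characterisation. Everything else in the argument is bookkeeping with the standard spectral inclusions.
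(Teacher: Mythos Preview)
Your argument is correct. For part (i) the forward direction is exactly the paper's proof; for the backward direction the paper instead invokes the identity $\sigma_{w}(T)=\sigma_{e}(T)\cup\sigma_{bw}(T)$, but your direct verification (Fredholm $\Rightarrow$ B-Fredholm $\Rightarrow$ B-Weyl $\Rightarrow$ index $0$ $\Rightarrow$ Weyl) is equally valid and avoids that auxiliary fact.

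For part (ii) the paper simply cites \cite[Theorem 2.1]{amouch-zguitti} and gives no argument, whereas you supply a full self-contained proof. Your harder implication is handled cleanly: the combination of Corollary~\ref{corollary0}(ii) (no finite non-zero ascent/descent for small commuting perturbations) with Browderness on the punctured disc does force invertibility there, and then Corollary~\ref{corollary0}(v) delivers the pole. This is the same mechanism the paper itself uses in the proof of Theorem~\ref{thm1}, so your worry about ``passing from Browder to invertible on the punctured neighbourhood'' is unfounded --- parts (ii) and (v) of Corollary~\ref{corollary0} are precisely designed for this, as you correctly identified. Your version has the advantage of being independent of the external reference.
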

\begin{proof} (i) $\Longrightarrow)$ Let $\lambda\not\in\sigma_{bf}(T)$ be arbitrary. Then from the punctured
neighborhood theorem for semi-B-Fredholm operators--Corollary \ref{corollary1}, there exists $\epsilon>0$ such that for all  $\mu \in D(\lambda, \epsilon)\setminus\{\lambda\},$ we have  $\mu \in (\sigma_{e}(T))^C=(\sigma_{w}(T))^C$ and $\mbox{ind}(T-\mu I)=\mbox{ind}(T-\lambda I)=0.$ Hence $\lambda\not\in\sigma_{bw}(T)$ and then $\sigma_{bw}(T)=\sigma_{bf}(T).$ \\
$\Longleftarrow)$ Since  $\sigma_{w}(T)=\sigma_{e}(T)\cup\sigma_{bw}(T),$ then $\sigma_{w}(T)=\sigma_{e}(T)\cup\sigma_{bf}(T)=\sigma_{e}(T).$\\
(ii) See  \cite[Theorem 2.1]{amouch-zguitti}.
\end{proof}
\begin{rema}
A hole  in $\sigma_{e}(T)$ is a bounded component of $(\sigma_{e}(T))^C.$ It is well known that  every hole $H$   in $\sigma_{e}(T)$ of an operator $T$ is associated with a unique index noted $i(H).$ For more details about this concept see \cite{pearcy}.  Thus  $\sigma_{e}(T)=\sigma_{w}(T)$ if and only if    $\sigma_{e}(T)$ has no holes with non null  index.  For the right shift $R,$ we have $H=\{ \lambda\in\mathbb{C}: 0\leq |\lambda|< 1 \}$ is the unique hole  in $\sigma_{e}(R)=C(0, 1)$  with $i(H)=-1\neq 0.$ This again explains   why $R \notin (B_{e}).$
\end{rema}
It is well known that  if $T\in(aB)$ then $T\in(B),$ but  not conversely, see for example \cite[example: p 406]{bsz}.   Furthermore, we observe that the left shift operator $L\in (aB),$  since $\sigma_{a}(L)=\sigma_{ub}(L)=D(0, 1),$ $\Pi_{a}^0(L)=\emptyset.$  But  $L\not\in(B_{e}),$ since $\sigma_{e}(L)=C(0, 1)$ and $\sigma_{b}(L)=D(0, 1).$ From this and Propsition \ref{propB}, it is of interest to consider the following open question on  an  eventual  relationship  between the classes $(aB)$ and $(B_{e}).$
\begin{itemize}
\item[]{\bf {Question}:} Does there exist an operator $T\in L(X)$ such that $T\in(B_{e}),$  but $T\not\in (aB)$?
\end{itemize}

Now, we  characterize in  the next theorem  the  class  $(B_{e})$-operators in terms of localized SVEP.
\begin{thm}\label{thm2}
Let $T \in L(X).$ The following statements are equivalent.\\
(i) $T\in (B_{e});$\\
(ii) $T\oplus T^{*}$ has the SVEP on $(\sigma_{e}(T))^C;$\\
(iii) $T\oplus T^{*}\in (B_{e});$\\
(iv) $T\oplus T^{*}$ has the SVEP on $(\sigma_{bf}(T))^C.$
\end{thm}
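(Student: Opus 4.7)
\textbf{Proposal for the proof of Theorem \ref{thm2}.}

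The plan is to close the cycle (i) $\Rightarrow$ (iv) $\Rightarrow$ (ii) $\Rightarrow$ (i), and handle (iii) separately via the spectral identities for adjoints. Throughout, I will use the standard observation that $T\oplus T^{*}$ has the SVEP at $\lambda$ if and only if both $T$ and $T^{*}$ have the SVEP at $\lambda$ (splitting any analytic solution of $(T\oplus T^{*}-\mu I)f(\mu)=0$ into its two components).

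For (i) $\Rightarrow$ (iv), I invoke Theorem \ref{thm1}: $T\in(B_{e})=(gB_{e})$, so $\sigma_{bf}(T)=\sigma_{d}(T)$. If $\lambda\notin\sigma_{bf}(T)$, then $\lambda\notin\sigma_{d}(T)$, hence $p(T-\lambda I)<\infty$ and $q(T-\lambda I)<\infty$. By implication $(A)$, $T$ has the SVEP at $\lambda$; by implication $(B)$, $T^{*}$ has the SVEP at $\lambda$. Combining, $T\oplus T^{*}$ has the SVEP at every point of $(\sigma_{bf}(T))^{C}$.

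The implication (iv) $\Rightarrow$ (ii) is immediate from the inclusion $\sigma_{bf}(T)\subseteq\sigma_{e}(T)$, which gives $(\sigma_{e}(T))^{C}\subseteq(\sigma_{bf}(T))^{C}$. For (ii) $\Rightarrow$ (i), take $\lambda\notin\sigma_{e}(T)$, so $T-\lambda I$ is Fredholm, in particular semi-B-Fredholm. By hypothesis, $T$ and $T^{*}$ both have the SVEP at $\lambda$; the semi-B-Fredholm hypothesis upgrades implications $(A)$ and $(B)$ to equivalences (as noted in the excerpt just after $(B)$), so $p(T-\lambda I)<\infty$ and $q(T-\lambda I)<\infty$. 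Hence $\lambda\notin\sigma_{b}(T)$, proving $\sigma_{b}(T)\subseteq\sigma_{e}(T)$; the reverse inclusion is automatic, so $\sigma_{e}(T)=\sigma_{b}(T)$, i.e., $T\in(B_{e})$.

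Finally, for (i) $\Leftrightarrow$ (iii), I use $\sigma_{e}(T)=\sigma_{e}(T^{*})$ and $\sigma_{b}(T)=\sigma_{b}(T^{*})$ (Remark \ref{rema0}), together with the direct-sum formulas $\sigma_{e}(T\oplus T^{*})=\sigma_{e}(T)\cup\sigma_{e}(T^{*})$ and $\sigma_{b}(T\oplus T^{*})=\sigma_{b}(T)\cup\sigma_{b}(T^{*})$. These give $\sigma_{e}(T\oplus T^{*})=\sigma_{e}(T)$ and $\sigma_{b}(T\oplus T^{*})=\sigma_{b}(T)$, so $T\oplus T^{*}\in(B_{e})\iff\sigma_{e}(T)=\sigma_{b}(T)\iff T\in(B_{e})$. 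No real obstacle is expected; the only delicate point is being careful to apply the semi-Fredholm equivalence version of $(A)$ and $(B)$ in step (ii) $\Rightarrow$ (i), where Fredholmness of $T-\lambda I$ (obtained from $\lambda\notin\sigma_{e}(T)$) is exactly what permits passing from SVEP back to finiteness of ascent and descent.
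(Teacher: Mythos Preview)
Your proof is correct and uses essentially the same ingredients as the paper: implications $(A)$ and $(B)$ and their converses under the semi-B-Fredholm hypothesis, Theorem~\ref{thm1}, and the direct-sum identities $\sigma_{e}(T\oplus T^{*})=\sigma_{e}(T)$, $\sigma_{b}(T\oplus T^{*})=\sigma_{b}(T)$. The only cosmetic difference is that the paper proves (i) $\Leftrightarrow$ (ii) directly (via the decomposition $(\sigma_{e}(T))^{C}=\Pi^{0}(T)\cup(\sigma(T))^{C}$) and then argues (i) $\Leftrightarrow$ (iv) analogously for $(gB_{e})$ before invoking Theorem~\ref{thm1}, whereas you route (i) $\Rightarrow$ (iv) through Theorem~\ref{thm1} first and close the cycle via (iv) $\Rightarrow$ (ii) $\Rightarrow$ (i).
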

\begin{proof}
(i) $\Longrightarrow$ (ii) It is easy to get    $(\sigma_{e}(T))^C=\Delta_e(T)\cup\, (\sigma(T))^C.$  As $T\in(B_{e}),$  then $$(\sigma_{e}(T))^C = \Pi^0(T) \cup (\sigma(T))^C \subset ({\mathcal S}(T)\cup{\mathcal S}(T^*))^C=({\mathcal S}(T\oplus T^*))^C.$$ So $T\oplus T^{*}$ has the SVEP on $(\sigma_{e}(T))^C.$\\
 (ii) $\Longrightarrow$ (i) If    $T\oplus T^{*}$ has the SVEP on $(\sigma_{e}(T))^C,$ then by implications $(A)$ and $(B)$ mentioned  above, we conclude that  $$(\sigma_{e}(T))^C=(\sigma_{e}(T))^C\cap ({\mathcal S}(T\oplus T^*))^C=(\sigma_{e}(T))^C\cap (\sigma_{d}(T))^C=(\sigma_{b}(T))^C.$$ This proves that  $T\in(B_{e}).$\\
 (i) $\Longleftrightarrow$ (iv) In the same way, we prove that $T\in(gB_{e})$ if and only if
 $T\oplus T^{*}$ has the SVEP on $(\sigma_{bf}(T))^C,$ and since  the class  $(B_{e})$ coincides with the class $(gB_{e}),$ then the proof is complete.\\
(i) $\Longleftrightarrow$ (iii) Clair, since  $\sigma_{e}(T \oplus T^*)= \sigma_{e}(T)\cup \sigma_{e}(T^*)= \sigma_{e}(T)$ and $\sigma_{b}(T \oplus T^*)= \sigma_{b}(T)\cup\sigma_{b}(T^*)=\sigma_{b}(T)$ for every operator $T \in L(X).$
\end{proof}

According to Theorem \ref{thm2}, we have the following remark.

\begin{rema}\label{rema1}~~

\noindent 1.  If $T\in L(X),$ we cannot  guarantee the property $``T\in (B_{e})"$ for an operator $T$  if we restrict  only to the SVEP of $T$  on $(\sigma_{e}(T))^C$ or  to the SVEP of $T^*$ on $(\sigma_{e}(T))^C.$ Indeed,  the right shift operator  $R$ defined above  has the SVEP (since it has no eigenvalues), but  $R\not\in(B_{e}).$  Here  ${\mathcal S}(R\oplus R^*)={\mathcal S}(L)=\{ \lambda\in\mathbb{C}: 0\leq |\lambda|< 1 \}$  and $\sigma_{e}(R)=C(0, 1).$ On the other hand, the left shift operator $L\not\in(B_{e}),$  even if $L^*=R$ has the SVEP.\\
2. We know that $T\in(B_{e})$ if and only if  $T\oplus T^{*}\in (B_{e}).$ Moreover, if  $S,T \in(B_{e})$ then $S\oplus T\in(B_{e}).$  Indeed, $\sigma_{e}(S \oplus T)= \sigma_{e}(S)\cup \sigma_{e}(T)=\sigma_{b}(S)\cup \sigma_{b}(T)=\sigma_{b}(S \oplus T).$ 
\end{rema}

\noindent For $T\in L(X),$ let $\mbox{Hol}(\sigma(T))$ the set of all analytic functions defined on an open
neighborhood of  $\sigma(T),$ and  $\mbox{Hol}_{nc}(\sigma(T))=\{ f\in \mbox{Hol}(\sigma(T)):  f \mbox{ is non-constant on any connected component of } \sigma(T)\}.$

\begin{cor} \label{cor2}Let $T \in L(X)$ and let $f\in\mbox{\rm Hol}(\sigma(T)).$ Then  \\
(i) If $T\in(B_{e})$ and $R \in L(X)$ is a Riesz operator which commutes with $T,$ then $f(T) + R, f(T^*) + R \in (B_{e}).$\\
(ii) If $T\oplus T^*$ has the SVEP and $K \in L(X)$ is an algebraic operator which commutes with $T,$ then $T + K\in(B_{e}).$
\end{cor}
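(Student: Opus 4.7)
The plan is to reduce both parts to standard invariance properties of the essential and Browder spectra (for (i)) and to the SVEP characterization of $(B_{e})$ from Theorem \ref{thm2} (for (ii)).

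For part (i), I start from the reformulation $T\in (B_{e})\Longleftrightarrow \sigma_{e}(T)=\sigma_{b}(T)$ given in Definition \ref{dfn1}. The spectral mapping theorems $\sigma_{e}(f(T))=f(\sigma_{e}(T))$ and $\sigma_{b}(f(T))=f(\sigma_{b}(T))$, which are valid for every $f\in \mbox{Hol}(\sigma(T))$, then give $\sigma_{e}(f(T))=\sigma_{b}(f(T))$. Since $R$ commutes with $T$, the Dunford calculus provides $Rf(T)=f(T)R$, and the classical invariance of $\sigma_{e}$ and $\sigma_{b}$ under commuting Riesz perturbations yields
\[
\sigma_{e}(f(T)+R)=\sigma_{e}(f(T))=\sigma_{b}(f(T))=\sigma_{b}(f(T)+R),
\]
so $f(T)+R\in (B_{e})$. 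For $f(T^{*})+R$, I would use that $T^{*}\in (B_{e})$ by Remark \ref{rema0}, apply the same chain of equalities to $T^{*}$, and rely on the dual commutation $R^{*}T^{*}=T^{*}R^{*}$ (with $R^{*}$ itself a Riesz operator) to absorb the perturbation on the dual side.

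For part (ii), the equivalence (i) $\Longleftrightarrow$ (ii) of Theorem \ref{thm2} reduces the task to showing that $(T+K)\oplus (T+K)^{*}$ has the SVEP on $(\sigma_{e}(T+K))^{C}$. Since $KT=TK$, we have $K^{*}T^{*}=T^{*}K^{*}$, and $K^{*}$ is algebraic whenever $K$ is. It therefore suffices to invoke the known principle that SVEP is preserved under commuting algebraic perturbations: if $S$ has the SVEP and $K$ is algebraic with $SK=KS$, then $S+K$ has the SVEP (proved via the finite spectral decomposition induced by the minimal polynomial of $K$). Applied to $T$ and to $T^{*}$, both $T+K$ and $(T+K)^{*}$ acquire the SVEP everywhere on $\mathbb{C}$, and in particular on $(\sigma_{e}(T+K))^{C}$, which concludes the argument.

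The main obstacle is not technical but bookkeeping: assembling the correct off-the-shelf tools (spectral mapping for $\sigma_{e}$ and $\sigma_{b}$, invariance of these spectra under commuting Riesz perturbations, and SVEP stability under commuting algebraic perturbations) and reconciling the commutation hypothesis on $R$ with the occurrence of $T^{*}$ in (i), for which one implicitly uses the adjoint form $R^{*}T^{*}=T^{*}R^{*}$ rather than a direct commutation of $R$ with $T^{*}$.
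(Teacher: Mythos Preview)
Your proof follows essentially the same route as the paper: for (i) you pass from $\sigma_{e}(T)=\sigma_{b}(T)$ to $\sigma_{e}(f(T))=\sigma_{b}(f(T))$ via the spectral mapping theorems (the paper cites \cite{Gramsch}) and then invoke invariance of both spectra under commuting Riesz perturbations, and for (ii) you use exactly the paper's argument, namely that SVEP is preserved under commuting algebraic perturbations (the paper cites \cite[Theorem 2.145]{aiena1}) and then appeal to Theorem \ref{thm2}. Your flagging of the commutation issue for the $f(T^{*})+R$ term is apt---the paper's proof simply asserts $\sigma_{e}(f(T^{*})+R)=\sigma_{b}(f(T^{*})+R)$ without addressing why $R$ (rather than $R^{*}$) interacts correctly with $f(T^{*})$, so you have not missed anything the paper supplies.
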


\begin{proof}(i)  Since $T\in (B_{e}),$ then $\sigma_{e}(T)=\sigma_{b}(T)$ and from \cite{Gramsch} we have  $\sigma_{e}(f(T))=\sigma_{b}(f(T)).$ As the essential and Browder spectra of an operator are stable under commuting Riesz perturbations, then
 $\sigma_{e}(f(T)+R)=\sigma_{b}(f(T)+R)$ and $\sigma_{e}(f(T^*)+R)=\sigma_{b}(f(T^*)+R).$ This is equivalent to say that $f(T)+R\in (B_{e})$ and $f(T^*)+R\in (B_{e}).$\\
 (ii)  Since $K$ is algebraic then from \cite[Theorem 2.145]{aiena1}, $T+K$ and $T^*+K^*$ have the SVEP. So $T+K \in (B_{e}).$
\end{proof}

\begin{prop}\label{prop2} For  $T\in L(X),$ the two following statements hold. \\
1. The next assertions for $T$  are equivalent: ~~

i) $T$  has the SVEP on $(\sigma_{e}(T))^C;$ ~~

ii)  $\sigma_{ub}(T)\subset\sigma_{e}(T);$~~

iii) $T$  has the SVEP on $(\sigma_{bf}(T))^C;$ ~~

iv) $\sigma_{ld}(T)\subset\sigma_{bf}(T).$\\
2. $T\in(B_{e})\Longrightarrow\sigma_{ub}(T)\subset\sigma_{e}(T)\Longrightarrow T\in (B).$
\end{prop}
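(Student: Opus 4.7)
The plan is to establish the four equivalences of Part 1 by proving (i)$\Leftrightarrow$(ii), (iii)$\Leftrightarrow$(iv), the trivial inclusion (iii)$\Rightarrow$(i), and then (i)$\Rightarrow$(iii) via the punctured neighborhood theorem; Part 2 will then fall out as two short observations using Part 1 and the SVEP characterization of Browder's theorem.

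For (i)$\Leftrightarrow$(ii), I would observe that if $\lambda\notin\sigma_{e}(T)$ then $T-\lambda I$ is Fredholm, hence semi-B-Fredholm, so by the equivalence stated just after implication $(A)$ in the excerpt, $T$ has SVEP at $\lambda$ if and only if $p(T-\lambda I)<\infty$. For a Fredholm operator, finite ascent is equivalent to being upper semi-Browder, i.e.\ to $\lambda\notin\sigma_{ub}(T)$. Thus SVEP of $T$ on $(\sigma_{e}(T))^{C}$ is the same as $\sigma_{ub}(T)\cap(\sigma_{e}(T))^{C}=\emptyset$, which is (ii). The proof of (iii)$\Leftrightarrow$(iv) is structurally identical: for $\lambda\notin\sigma_{bf}(T)$ the operator $T-\lambda I$ is B-Fredholm, the same SVEP-versus-ascent equivalence applies, and a B-Fredholm operator with finite ascent is exactly left Drazin invertible, so the SVEP condition reads $\sigma_{ld}(T)\cap(\sigma_{bf}(T))^{C}=\emptyset$.

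The inclusion $\sigma_{bf}(T)\subset\sigma_{e}(T)$ gives (iii)$\Rightarrow$(i) for free. The main step is (i)$\Rightarrow$(iii): given $\lambda\notin\sigma_{bf}(T)$, Corollary \ref{corollary1} provides $\epsilon>0$ such that $D(\lambda,\epsilon)\setminus\{\lambda\}\subset(\sigma_{e}(T))^{C}$, and (i) forces $T$ to have SVEP on this punctured disc. Since the set $\mathcal{S}(T)$ is open in $\mathbb{C}$, as already invoked in Example \ref{ex1}(3) of the excerpt, it cannot contain the single point $\lambda$ while missing a full punctured neighborhood of it, and so $T$ has SVEP at $\lambda$ as well.

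Part 2 is then a short chain. The first implication is immediate: $T\in(B_{e})$ gives $\sigma_{e}(T)=\sigma_{b}(T)$, and $\sigma_{ub}(T)\subset\sigma_{b}(T)$ holds for every operator. For the second implication, $\sigma_{ub}(T)\subset\sigma_{e}(T)$ is statement (ii), which by Part 1 forces SVEP of $T$ on $(\sigma_{e}(T))^{C}\supset(\sigma_{w}(T))^{C}$; and the well-known SVEP characterization of Browder's theorem (namely $T\in(B)$ if and only if $T$ has SVEP on $(\sigma_{w}(T))^{C}$) yields $T\in(B)$. The only genuine obstacle I anticipate is the step (i)$\Rightarrow$(iii), which crucially combines Corollary \ref{corollary1} with the openness of $\mathcal{S}(T)$; every other step is routine translation between SVEP and finite-ascent statements via the equivalence attached to implication $(A)$.
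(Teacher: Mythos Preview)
Your proof is correct, and for the equivalences (i)$\Leftrightarrow$(ii), (iii)$\Leftrightarrow$(iv), and (iii)$\Rightarrow$(i) you follow exactly the paper's route (the paper likewise records the identity $(\sigma_{bf}(T))^{C}\cap(\mathcal{S}(T))^{C}=(\sigma_{bf}(T))^{C}\cap(\sigma_{ld}(T))^{C}$ without further justification, so your appeal to ``B-Fredholm with finite ascent $=$ left Drazin invertible'' matches their level of detail). The genuine difference is how the cycle is closed. The paper proves (ii)$\Rightarrow$(iv): given $\lambda\notin\sigma_{bf}(T)$, Corollary~\ref{corollary1} puts a punctured disc inside $(\sigma_{e}(T))^{C}\subset(\sigma_{ub}(T))^{C}$, then Corollary~\ref{corollary0} transfers finite ascent from the nearby $T-\mu I$ back to $T-\lambda I$, and finally \cite[Lemma~12]{mbekhta} is invoked to check that $\mathcal{R}((T-\lambda I)^{p+1})$ is closed. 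Your (i)$\Rightarrow$(iii) replaces this spectral/range-closedness computation by a one-line topological observation: Corollary~\ref{corollary1} gives a punctured disc in $(\sigma_{e}(T))^{C}\subset(\mathcal{S}(T))^{C}$, and the openness of $\mathcal{S}(T)$ then forces $\lambda\notin\mathcal{S}(T)$. This is shorter and avoids the Mbekhta--M\"uller lemma entirely; the paper's route, on the other hand, is more self-contained in that it never appeals to the openness of $\mathcal{S}(T)$. For Part~2 the paper simply writes ``Clair''; your argument via $\sigma_{ub}(T)\subset\sigma_{b}(T)=\sigma_{e}(T)$ for the first arrow and the SVEP characterization of $(B)$ for the second is exactly the intended unpacking.
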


\begin{proof}  ~~

 \noindent 1. \noindent (i) $\Longleftrightarrow$ (ii) Remark that we have always $(\sigma_{e}(T))^C \cap ({\mathcal S}(T))^C=(\sigma_{e}(T))^C \cap (\sigma_{ub}(T))^C$ (see the implication $(A)$ above). If  $T$  has the SVEP on $(\sigma_{e}(T))^C$ then
$(\sigma_{e}(T))^C=(\sigma_{e}(T))^C \cap ({\mathcal S}(T))^C\subset (\sigma_{ub}(T))^C.$ The reverse implication is obvious.\\
(iii) $\Longleftrightarrow$ (iv) Goes similarly with the proof of precedent equivalence. Note that the equality  $(\sigma_{bf}(T))^C \cap ({\mathcal S}(T))^C=(\sigma_{bf}(T))^C \cap (\sigma_{ld}(T))^C$ is always true.\\
(iii) $\Longrightarrow$ (i) Trivial. \\
(ii) $\Longrightarrow$ (iv) Let $\lambda\not\in\sigma_{bf}(T)$ be arbitrary. From the punctured
neighborhood theorem for semi-B-Fredholm operators--Corollary \ref{corollary1}, there exists $\epsilon>0$ such that $D(\lambda, \epsilon)\setminus\{\lambda\}\subset(\sigma_{e}(T))^C\subset(\sigma_{ub}(T))^C.$ By Corollary \ref{corollary0}, we conclude that $p(T-\lambda I)<\infty.$  Furthermore,
  for $n$ large enough, $\R((T-\lambda I)^n)$ is closed, since $T-\lambda I$ is a B-Fredholm. As $\R(T-\lambda I)\cap \NN((T-\lambda I)^{i})=\R(T-\lambda I)\cap \NN((T-\lambda I)^{i+1})$ for all $i\geq p(T-\lambda I),$ we conclude from \cite[Lemma 12]{mbekhta} that $\R((T-\lambda I)^{p(T-\lambda I)+1})$ is closed, and then $\lambda\not\in\sigma_{ld}(T).$ \\
  2. Clair.
\end{proof}

The reverse of  implications proved in  the second statement of Proposition \ref{prop2} are not true, as the next following example shows.

\begin{ex} ~~

\noindent 1.  We consider the left shift operator $L.$  It is clair that $L\in (B),$  but $\sigma_{ub}(L)=D(0, 1)\not\subset\sigma_{e}(L)=C(0, 1).$\\
2. For the right shift operator $R,$ we have  $\sigma_{ub}(R)=\sigma_{e}(R)=C(0, 1),$ but  $R\not\in(B_{e}).$
\end{ex}

According to the Proposition \ref{prop2}, it is natural to ask the following question: let $T\in L(X),$ do we have always $\sigma_{ub}(T)\supset\sigma_{e}(T)\Longleftrightarrow\sigma_{ld}(T)\supset\sigma_{bf}(T)?$ The following theorem answers affirmatively this question.

\begin{thm}\label{thm3} For every  $T\in L(X),$ we have 
$$\sigma_{e}(T)\subset\sigma_{ub}(T)\Longleftrightarrow\sigma_{bf}(T)\subset\sigma_{ld}(T).$$ Furthermore, we have $\sigma_{ub}(T)=\sigma_{e}(T)\Longleftrightarrow\sigma_{ld}(T)=\sigma_{bf}(T).$
\end{thm}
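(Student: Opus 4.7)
The plan is to prove the first equivalence directly via the punctured neighbourhood machinery of Corollaries \ref{corollary0} and \ref{corollary1}, exploiting the fact that whenever $\lambda\notin\sigma_{ld}(T)$ the operator $T-\lambda I$ possesses topological uniform descent, and then to deduce the ``Furthermore'' clause by combining with Proposition \ref{prop2}.

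For the implication $(\Longleftarrow)$, I would assume $\sigma_{bf}(T)\subset\sigma_{ld}(T)$ and fix $\lambda\notin\sigma_{ub}(T)$. Then $T-\lambda I$ is upper semi-Fredholm with finite ascent $p$; in particular $\lambda\notin\sigma_{ld}(T)$, hence by hypothesis $\lambda\notin\sigma_{bf}(T)$, so $T-\lambda I$ is simultaneously upper semi-Fredholm and B-Fredholm. It remains to deduce Fredholmness, which I plan to do through the standard identification $\R((T-\lambda I)^n)/\R((T-\lambda I)^{n+1})\cong X/(\R(T-\lambda I)+\NN((T-\lambda I)^n))$. For $n\geq p$ the kernel stabilizes, $\NN((T-\lambda I)^n)=\NN((T-\lambda I)^p)$, and B-Fredholmness forces the right-hand side to be finite-dimensional. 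Coupled with $\dim\NN((T-\lambda I)^p)\leq p\,\alpha(T-\lambda I)<\infty$, this shows $\beta(T-\lambda I)<\infty$, so $\lambda\notin\sigma_{e}(T)$.

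For the implication $(\Longrightarrow)$, I would assume $\sigma_{e}(T)\subset\sigma_{ub}(T)$ and fix $\lambda\notin\sigma_{ld}(T)$, so that $T-\lambda I$ has topological uniform descent and finite ascent. Applying Corollary \ref{corollary0}(iv) with $V=T-\mu I$ for $\mu\neq\lambda$ close to $\lambda$, the operator $V$ is bounded below, whence $\mu\notin\sigma_{ub}(T)$; the hypothesis then gives $\mu\notin\sigma_{e}(T)$, so $V$ is also lower semi-Fredholm. Corollary \ref{corollary0}(vii) now supplies some $n$ with $\dim(\R((T-\lambda I)^n)/\R((T-\lambda I)^{n+1}))<\infty$, and selecting such $n$ large enough so that $\R((T-\lambda I)^n)$ is closed and $\alpha((T-\lambda I)_{[n]})<\infty$ (both guaranteed by the upper semi-B-Fredholm property of $T-\lambda I$), I conclude that $(T-\lambda I)_{[n]}$ is Fredholm, i.e.\ $\lambda\notin\sigma_{bf}(T)$. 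The ``Furthermore'' clause then follows by intersecting this equivalence with the symmetric equivalence $\sigma_{ub}(T)\subset\sigma_{e}(T)\Longleftrightarrow\sigma_{ld}(T)\subset\sigma_{bf}(T)$ supplied by Proposition \ref{prop2}.

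The principal obstacle lies in the $(\Longrightarrow)$ direction: one must choose a single index $n$ that simultaneously realises closedness of $\R((T-\lambda I)^n)$, finiteness of $\alpha((T-\lambda I)_{[n]})$, and finiteness of $\beta((T-\lambda I)_{[n]})$. This uniformity is precisely what Grabiner's topological uniform descent framework, encapsulated in Corollaries \ref{corollary0} and \ref{corollary1}, is designed to deliver.
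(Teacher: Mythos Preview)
Your proposal is correct and follows essentially the same strategy as the paper: for $(\Longleftarrow)$ you both pass from $\lambda\notin\sigma_{ub}(T)$ to $\lambda\notin\sigma_{ld}(T)\supset\sigma_{bf}(T)$ and then argue that upper semi-Fredholm together with B-Fredholm forces Fredholm; for $(\Longrightarrow)$ you both use Grabiner's punctured-neighbourhood results to show that points near $\lambda$ are upper semi-Browder, invoke the hypothesis to make them Fredholm, and then pull this back to $\lambda$; and the ``Furthermore'' clause is obtained from Proposition~\ref{prop2}. The only cosmetic difference is that the paper finishes $(\Longrightarrow)$ via the index constancy in \cite[Theorem 4.7]{Grabiner} (upper semi-B-Fredholm of finite index is B-Fredholm), whereas you reach the same conclusion through Corollary~\ref{corollary0}(vii) and an explicit choice of restriction level~$n$; in $(\Longleftarrow)$ the paper simply says ``$\alpha$ finite and B-Fredholm imply $\beta$ finite'' while you unpack this via the isomorphism $\R(S^n)/\R(S^{n+1})\cong X/(\R(S)+\NN(S^n))$.
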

\begin{proof}
$\Longrightarrow$) Let $\lambda\not\in\sigma_{ld}(T)$ be arbitrary, then $T-\lambda I$ is an upper semi-B-Fredholm operator. From the punctured neighborhood theorem for semi-B-Fredholm operators--Corollary \ref {corollary1}, there exists $\epsilon>0$ such that $D(\lambda, \epsilon)\setminus\{\lambda\}\subset(\sigma_{uf}(T))^C$ and from \cite[Theorem 4.7]{Grabiner}, $\mbox{ind}(T-\mu I)= \mbox{ind}(T-\lambda I)$ for all $\mu$ such that $0<|\mu-\lambda|<\epsilon.$ Since $p(T-\lambda I) < \infty$ then  by Corollary \ref{corollary0}, we conclude that  for all $\mu \in D(\lambda, \epsilon)\setminus\{\lambda\},$   $p(T-\mu I)=0$ and  hence  $\mu \not\in \sigma_{ub}(T).$ By hypothesis,  we have $\mu\not\in\sigma_{e}(T).$  Hence $ T-\lambda I$ is an upper semi-B-Fredholm operator of finite index and so  $\lambda \not\in \sigma_{bf}(T).$\\
$\Longrightarrow$) If $\lambda\not\in\sigma_{ub}(T),$ as $\sigma_{ld}(T)\subset\sigma_{ub}(T)$ then by hypothesis  $\lambda \not\in  \sigma_{bf}(T).$ Since $\alpha(T-\lambda I)$ is finite, then  $\beta(T-\lambda I)$ is also finite.  Hence   $\lambda \notin \sigma_{e}(T).$

 As conclusion, we deduce from the previous  equivalence and Proposition \ref{prop2} that $\sigma_{ub}(T)=\sigma_{e}(T)\Longleftrightarrow\sigma_{ld}(T)=\sigma_{bf}(T).$
\end{proof}

We now establish some characterizations of operators belong to the class $(B_{e})$ in terms of the quasinilpotent part $H_{0}(T\oplus T^{*})$ of $T\oplus T^{*}.$ Recall that the \textit{quasinilpotent} part $H_0(T)$ of $T\in L(X)$ is defined
 as the set $H_{0}(T)=\{x\in X: \displaystyle\lim_{n\rightarrow\infty}\|T^{n}(x)\|^{\frac{1}{n}}=0\},$ and the \textit{analytic core} $K(T)$ of $T$ is defined by
$K(T)=\{x\in X : \mbox{ there exist } c>0 \mbox{ and a sequence }
(x_n)  \mbox{ of } X \mbox{ such that } x_0=x, Tx_{n+1}=x_n
 \mbox{ and } \|x_n\|\leq c^n\|x\|
\mbox{ for all } n\in\mathbb{N}\}.$ (See  \cite{aiena1} for information on $H_0(T)$ and $K(T)).$

\begin{thm}\label{thm4}
Let  $T \in L(X).$ The following statements are
equivalent.\\
(i) $T\in (B_{e});$ \\
(ii) For every $\lambda \in \Delta_{e}(T),$   $H_{0}((T\oplus T^{*})-\lambda I)=\mathcal{N} (((T\oplus T^{*})-\lambda I)^{p}),$ where  $p=p(T-\lambda I)\in \N;$\\
(iii) $H_{0}((T\oplus T^{*})-\lambda I)$ has finite-dimensional for every $\lambda \in \Delta_{e}(T);$\\
(iv) $H_{0}((T\oplus T^{*})-\lambda I)$ is closed for all $\lambda \in \Delta_{e}(T);$\\
(v) $K((T\oplus T^*)-\lambda I)$ has finite-codimension for all $\lambda \in \Delta_{e}(T).$
\end{thm}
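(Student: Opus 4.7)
My plan is to run the cycle $(i)\Rightarrow(ii)\Rightarrow(iii)\Rightarrow(iv)\Rightarrow(i)$ and then establish $(i)\Leftrightarrow(v)$ separately. The whole argument rests on the elementary identifications
\begin{align*}
H_0((T\oplus T^*)-\lambda I) &= H_0(T-\lambda I)\oplus H_0(T^*-\lambda I),\\
\mathcal{N}(((T\oplus T^*)-\lambda I)^p) &= \mathcal{N}((T-\lambda I)^p)\oplus \mathcal{N}((T^*-\lambda I)^p),\\
K((T\oplus T^*)-\lambda I) &= K(T-\lambda I)\oplus K(T^*-\lambda I),
\end{align*}
which allow each statement to be split into parallel assertions on $T$ and on $T^*$.

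For $(i)\Rightarrow(ii)$, the hypothesis $T\in(B_e)$ means that every $\lambda\in\Delta_e(T)$ is a Browder point of $T$, so $p:=p(T-\lambda I)=q(T-\lambda I)<\infty$; by Fredholm duality the same integer $p$ serves as both ascent and descent of $T^*-\lambda I$. A standard identification from \cite{aiena1} then gives $H_0(T-\lambda I)=\mathcal{N}((T-\lambda I)^p)$ and likewise for $T^*$, which together with the decomposition above yields $(ii)$. The implication $(ii)\Rightarrow(iii)$ is immediate since $T-\lambda I$ Fredholm with finite ascent $p$ forces $\mathcal{N}((T-\lambda I)^p)$ to be finite-dimensional, and $(iii)\Rightarrow(iv)$ is trivial.

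The central step is $(iv)\Rightarrow(i)$: I would invoke the fact that closedness of $H_0(S-\mu I)$ implies SVEP of $S$ at $\mu$ (cf.\ \cite{aiena1}) applied to $S=T\oplus T^*$, obtaining SVEP of $T\oplus T^*$ on $\Delta_e(T)$. Combined with the automatic SVEP on the resolvent set and the identity $(\sigma_e(T))^C=\Delta_e(T)\cup(\sigma(T))^C$, this delivers SVEP of $T\oplus T^*$ on $(\sigma_e(T))^C$, and Theorem \ref{thm2} closes the loop.

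Finally for $(i)\Leftrightarrow(v)$, the forward direction is immediate: Browderness at $\lambda\in\Delta_e(T)$ gives $K(T-\lambda I)=\mathcal{R}((T-\lambda I)^p)$ of finite codimension, and similarly for $T^*$. The main obstacle is the converse: assuming $\mbox{codim}\,K((T\oplus T^*)-\lambda I)<\infty$, I would exploit the inclusion $K(T-\lambda I)\subset \mathcal{R}((T-\lambda I)^n)$ valid for every $n$; since $T-\lambda I$ is Fredholm the ranges $\mathcal{R}((T-\lambda I)^n)$ form a decreasing chain of closed subspaces whose codimensions are all bounded by $\mbox{codim}\,K(T-\lambda I)$, so the chain must stabilize, forcing $q(T-\lambda I)<\infty$. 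The same reasoning applied to $T^*$, together with the Fredholm duality $p(T-\lambda I)=q(T^*-\lambda I)$, then yields $p(T-\lambda I)<\infty$, so $T-\lambda I$ is Browder and $T\in(B_e)$.
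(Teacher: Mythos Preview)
Your argument is correct, and the ingredients you invoke (closed $H_0$ forces SVEP; $K\subset\R^n$ forces finite descent; Fredholm duality between ascent and descent) are exactly those used in the paper. The organization, however, differs in two places worth noting.

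First, the paper runs a single cycle $(i)\Rightarrow(ii)\Rightarrow(iii)\Rightarrow(iv)\Rightarrow(v)\Rightarrow(i)$ rather than your loop plus the separate equivalence $(i)\Leftrightarrow(v)$. In its step $(iv)\Rightarrow(v)$ the paper does not appeal to Theorem~\ref{thm2}; instead, from SVEP of $T\oplus T^*$ at $\lambda$ together with Fredholmness it extracts $\lambda\notin\mbox{acc}\,\sigma_a(T\oplus T^*)=\mbox{acc}\,\sigma(T)$, so $\lambda\in\mbox{iso}\,\sigma(T)$, and then quotes a result from \cite{aiena1} to obtain directly that $K((T\oplus T^*)-\lambda I)$ has finite codimension. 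Your route $(iv)\Rightarrow(i)$ via Theorem~\ref{thm2} is shorter and arguably cleaner, since it recycles a result already established in the paper and avoids the detour through isolated points.

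Second, you systematically split every statement into its $T$- and $T^*$-components via the direct-sum identities at the start, whereas the paper works throughout with $T\oplus T^*$ as a single operator (using, e.g., $p((T\oplus T^*)-\lambda I)=\max\{p(T-\lambda I),p(T^*-\lambda I)\}$ and the duality $p(T^*-\lambda I)=q(T-\lambda I)$ for closed range). Both viewpoints lead to the same conclusions; yours makes the role of each summand more explicit, the paper's keeps the notation lighter. Your $(v)\Rightarrow(i)$ is essentially identical to the paper's, just phrased componentwise.
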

\begin{proof}
(i) $\Longrightarrow$ (ii) Let $\lambda \in \Delta_{e}(T)$ be arbitrary. As  $T\in(B_{e}),$   then  $\Delta_{e}(T)=\Pi^0(T\oplus T^*).$  From \cite[Corollary 2.47]{aiena1}, $H_{0}((T\oplus T^{*})-\lambda I)=\mathcal{N} (((T\oplus T^{*})-\lambda I)^{p});$ where $p:=p((T\oplus T^{*})-\lambda I).$ Since $\R(T-\lambda I)$ is closed, then $p(T^*-\lambda I)=q(T-\lambda I)$ and hence $p=\max\{p(T-\lambda I), q(T-\lambda I)\}=p(T-\lambda I).$ \\
(ii) $\Longrightarrow$ (iii) If $\lambda\in\Delta_{e}(T),$ then  $\lambda \not\in\sigma_{e}(T)=\sigma_{e}(T\oplus T^*).$ So $\alpha((T\oplus T^{*})-\lambda I)$ is finite and then $\dim H_{0}((T\oplus T^{*})-\lambda I))= \alpha(((T\oplus T^{*})-\lambda I))^p$ is finite. \\
(iii) $\Longrightarrow$ (iv) Obvious.\\
(iv) $\Longrightarrow$ (v) Let $\lambda \in \Delta_{e}(T).$ Since $H_{0}((T\oplus T^{*})-\lambda I)$ is closed, then $T\oplus T^*$ has the SVEP at $\lambda.$ As $(T\oplus T^*)-\lambda I$ is a Fredholm operator, it follows from \cite[Theorem 2.97]{aiena1} that $\lambda \not\in\mbox{acc}\,\sigma_{a}(T\oplus T^*)=\mbox{acc}\,\sigma(T).$ Hence     $\lambda \in \mbox{iso}\,\sigma(T).$ From \cite[Theorem 2.100]{aiena1}, we conclude that  $K((T\oplus T^*)-\lambda I)$ has finite-codimension.\\
(v) $\Longrightarrow$ (i) Let $\lambda \in \Delta_{e}(T).$ Since $K((T\oplus T^*)-\lambda I)\subset\R(((T\oplus T^*)-\lambda I)^n)$ for all $n\in \mathbb{N}$ and   $K((T\oplus T^*)-\lambda I)$ has finite-codimension,  then the sequence $(\R(((T\oplus T^*)-\lambda I)^n))_n$ is stationary, and hence  $q:=q((T\oplus T^*)-\lambda I) <\infty.$ Since $\R(T-\lambda I)$ is closed, then $q(T^*-\lambda I)=p(T-\lambda I)$ and hence $q=\max\{p(T-\lambda I), q(T-\lambda I)\}<\infty.$ Hence $\lambda \in \Pi^
0(T)$ and then  $T\in(B_{e}).$
\end{proof}

Now we give the following theorem, which is a generalization of  the previous theorem to the context of B-Fredholm theory. Its proof goes in a similar way to that of  Theorem \ref {thm4}, and  is left to the reader.

\begin{thm}\label{thm5}
Let  $T \in L(X).$ The following statements are
equivalent:\\
(i) $T\in(gB_{e});$\\
(ii) For every $\lambda \in \Delta_{e}^g(T),$ there exists $p\in \mathbb{N}$ such that  $H_{0}((T\oplus T^{*})-\lambda I)=\mathcal{N} (((T\oplus T^{*})-\lambda I)^{p});$\\
(iii) $H_{0}((T\oplus T^{*})-\lambda I)$ is closed for all $\lambda \in \Delta_{e}^g(T).$
\end{thm}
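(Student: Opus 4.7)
The plan is to adapt the scheme of Theorem~\ref{thm4} to the B-Fredholm setting, replacing Fredholm by B-Fredholm, finite-rank poles $\Pi^0$ by arbitrary poles $\Pi$, and the essential spectrum $\sigma_e$ by the B-Fredholm spectrum $\sigma_{bf}$. Two background facts will do most of the work. First, $\lambda$ is a pole of $T$ of order $p$ if and only if $H_0(T-\lambda I)=\mathcal{N}((T-\lambda I)^p)$ (the analogue of \cite[Corollary 2.47]{aiena1} used in the proof of Theorem~\ref{thm4}). Second, the remark recorded after implications $(A)$ and $(B)$ in the introduction: when $T-\lambda I$ is semi-B-Fredholm, SVEP of $T$ at $\lambda$ is equivalent to $p(T-\lambda I)<\infty$, and SVEP of $T^*$ at $\lambda$ is equivalent to $q(T-\lambda I)<\infty$. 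I will also use the standard observations that closedness of $H_0(S)$ implies $S$ has SVEP at $0$, and that $H_0$ and iterated kernels behave well with direct sums.

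For (i) $\Rightarrow$ (ii), assume $T\in(gB_e)$, so $\Delta_e^g(T)=\Pi(T)$. Any $\lambda\in\Delta_e^g(T)$ is then a pole of $T$ of some order $p$; by duality it is a pole of $T^*$ of the same order, hence a pole of $T\oplus T^*$ of order $p$. Applying the pole characterization to $T\oplus T^*$ yields $H_0((T\oplus T^*)-\lambda I)=\mathcal{N}(((T\oplus T^*)-\lambda I)^p)$.

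Implication (ii) $\Rightarrow$ (iii) is immediate, since $\mathcal{N}(S^p)$ is the kernel of a bounded operator and hence closed. For (iii) $\Rightarrow$ (i), fix $\lambda\in\Delta_e^g(T)$. Because $\sigma_{bf}(T)=\sigma_{bf}(T^*)=\sigma_{bf}(T\oplus T^*)$, the operator $(T\oplus T^*)-\lambda I$ is semi-B-Fredholm. Closedness of $H_0((T\oplus T^*)-\lambda I)$ gives SVEP for $T\oplus T^*$ at $\lambda$, and hence SVEP for both $T$ and $T^*$ at $\lambda$. The semi-B-Fredholm equivalences recalled above then force $p(T-\lambda I)<\infty$ and $q(T-\lambda I)<\infty$, so $\lambda\in\Pi(T)$. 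This proves $\Delta_e^g(T)\subseteq\Pi(T)$; the reverse inclusion is automatic, since any pole is Drazin-invertible, hence B-Fredholm, and lies in $\sigma(T)$. Therefore $T\in(gB_e)$.

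In spirit the argument is a streamlined version of the proof of Theorem~\ref{thm4}: the detour through the isolation of $\lambda$ in $\sigma(T)$ and through the analytic core $K$ is bypassed by the direct SVEP/ascent--descent equivalence in the semi-B-Fredholm regime. The only point requiring mild care is in (i) $\Rightarrow$ (ii), namely verifying that the pole order for $T$ passes to $T\oplus T^*$; this follows from the compatibility of $H_0$ and $\mathcal{N}((\cdot)^p)$ with direct sums together with the identity $p(T-\lambda I)=q(T^*-\lambda I)$, valid whenever $\R(T-\lambda I)$ is closed. Hence I do not expect a genuine obstacle.
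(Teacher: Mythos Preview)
Your proof is correct and follows precisely the adaptation the paper has in mind (the paper explicitly leaves the argument to the reader as a variant of Theorem~\ref{thm4}); in particular, your streamlined route (iii)$\Rightarrow$(i) via the SVEP/ascent--descent equivalences for semi-B-Fredholm operators is exactly the natural replacement for the analytic-core detour, which has no counterpart in the statement of Theorem~\ref{thm5}. One small caveat on your closing parenthetical: invoking closedness of $\R(T-\lambda I)$ to transfer the pole order to $T^*$ is not quite safe in the B-Fredholm setting, since a pole $\lambda$ guarantees only that $\R((T-\lambda I)^p)$ is closed; this is harmless, however, because statement~(ii) asks merely for \emph{some} $p$, and $\lambda\in\Pi(T)\Leftrightarrow\lambda\in\Pi(T^*)$ already follows from $\sigma_d(T)=\sigma_d(T^*)$ (Remark~\ref{rema0}).
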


From Theorem \ref{thm1}, we deduce that  Theorem \ref{thm4} and Theorem \ref{thm5} can be  combined in a single as follows.

\begin{cor}Let $T\in L(X).$ Then the statements of Theorem \ref{thm4} and Theorem \ref{thm5} for $T$  are all equivalent.
\end{cor}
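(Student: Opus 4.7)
The plan is to deduce the corollary directly from Theorem~\ref{thm1} combined with Theorems~\ref{thm4} and~\ref{thm5}. Theorem~\ref{thm1} asserts that $(B_{e})=(gB_{e})$, so statement (i) of Theorem~\ref{thm4} is equivalent to statement (i) of Theorem~\ref{thm5}. This single equivalence plays the role of a bridge between the two lists of characterizations already established.

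Next I would simply invoke the internal chains already proved: Theorem~\ref{thm4} supplies (i)$\Leftrightarrow$(ii)$\Leftrightarrow$(iii)$\Leftrightarrow$(iv)$\Leftrightarrow$(v) for its own items, and Theorem~\ref{thm5} supplies (i)$\Leftrightarrow$(ii)$\Leftrightarrow$(iii) for its own items. Splicing these two lists together through the identification $(B_{e})=(gB_{e})$ produces a single chain of eight mutually equivalent conditions, which is exactly the statement of the corollary.

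The only point that might cause hesitation is that the assertions in Theorem~\ref{thm4} quantify over $\Delta_{e}(T)$ whereas those in Theorem~\ref{thm5} quantify over $\Delta_{e}^g(T)$, and these two sets differ in general. However, no direct comparison is needed between, for instance, \emph{$H_{0}((T\oplus T^{*})-\lambda I)$ is closed for every $\lambda\in\Delta_{e}(T)$} and \emph{$H_{0}((T\oplus T^{*})-\lambda I)$ is closed for every $\lambda\in\Delta_{e}^g(T)$}: each of these conditions is, by the preceding theorems, equivalent to $T\in(B_{e})$ or $T\in(gB_{e})$ respectively, and those two memberships coincide by Theorem~\ref{thm1}. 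I therefore do not anticipate any genuine obstacle; the corollary is a purely formal packaging of the earlier results, and the proof should amount to a single sentence pointing to Theorems~\ref{thm1}, \ref{thm4}, and \ref{thm5}.
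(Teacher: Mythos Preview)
Your proposal is correct and matches the paper's approach exactly: the paper also deduces the corollary in a single line from Theorem~\ref{thm1} (the coincidence of $(B_e)$ and $(gB_e)$), relying on the internal equivalences already established in Theorems~\ref{thm4} and~\ref{thm5}. Your additional remark about not needing to compare the quantifications over $\Delta_e(T)$ versus $\Delta_e^g(T)$ directly is a helpful clarification, but the core argument is identical.
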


\section{  On the class  $(W_{e})$-operators}

Similarly to the Definition \ref{dfn1}, we introduce in the next  a new class called   $(W_{e})$-operators,  as a proper subclass of the class $(W)$ of operators satisfying Weyl's theorem, and a new class called   $(gW_{e})$-operators, as a proper subclass of the class   $(gW)$ of operators satisfying generalized Weyl's theorem.

\begin{dfn}\label{dfn2}A bounded linear operator $T\in L(X)$ is said to belong to the class $(W_{e})$ [$T\in (W_{e})$ for brevity] if  $\Delta_{e}(T)=E^0(T)$ and is said to belong to the class
    $(gW_{e})$ [$T\in (gW_{e})$ for brevity] if $\Delta_{e}^g(T)=E(T).$
\end{dfn}

\begin{ex}\label{ex3}~~

 \noindent 1. Every normal operator acting on a Hilbert space belongs to the class $(W_{e}),$ see \cite[Chapter XI, Proposition  4.6]{conway}.\\
 \noindent 2. Let $T$   the operator defined on $l^2$ by  $T(x_1,x_2,x_3,\ldots)=(0, \frac{x_1}{2},\frac{x_2}{3}, \frac{x_3}{4},\ldots).$ Then  $\sigma(T)=\sigma_{e}(T)=\sigma_{bf}(T)=\{0\},$ $E(T)=E^0(T)=\emptyset.$  So $T\in (W_{e})\cap(gW_{e}).$\\
  The operator defined on $l^2$  by  $S(x_1,x_2,x_3,\ldots)=(\frac{x_2}{2},\frac{x_3}{3}, \frac{x_4}{4},\ldots),$ does not belong  neither to $(W_{e})$ nor to $(gW_{e}),$ since $E(S)=E^0(S)=\{0\}$ and $\sigma(S)=\sigma_{e}(S)=\sigma_{bf}(S)=\{0\}.$ This examples show also that  the classes $(W_{e})$ and $(gW_{e})$ are not stable under the duality of there elements.  Here $T\in (W_{e})\cap(gW_{e})$ but it's dual $T^*=S \notin (W_{e})\cup(gW_{e}).$\\
\noindent 3.  We consider the operator $U$ defined on $l^2\oplus l^2$ by $U=T\oplus 0,$ where $T$ is the operator defined in the first point. Then  $U \in(W_{e}),$ since $\sigma(U)=\sigma_{e}(U)=\{0\}$ and $E^0(U)=\emptyset.$ But  $U\not\in(gW_{e}),$ since $\sigma_{bf}(U)=E(U)=\{0\}.$
 \end{ex}

 \begin{ex}~~

 \noindent 1. If $T \in L(X)$ is an algebraic operator (equivalently  $\sigma_{d}(T)=\emptyset$), then $T \in (W_{e})\cap(gW_{e}).$ Indeed,  $\Pi(T)=\sigma(T)=E(T)=\sigma_{p}(T)$ and $E^0(T)=\mbox{iso}\,\sigma(T)\cap \sigma_{p}^0(T)=\Pi(T)\cap\sigma_{p}^0(T)=\Pi^0(T)=\sigma_{p}^0(T).$ \\
  2. Let $T \in L(X)$ a Riesz operator. We distinguish two cases.\\
  Case I:  $\sigma(T)$ is  finite

 a) Since $T$ is a Riesz operator, then $\Pi^0(T)=\sigma(T)\setminus\{0\}$ and since $\sigma(T)$ is finite then  $\mbox{iso}\,\sigma(T)=\sigma(T)$ and so $E^0(T)=\sigma_{p}^0(T).$ Hence
    $T \in (W_{e}) \Longleftrightarrow 0 \not\in \sigma_{p}^0(T).$ So if $T\in (W_{e}),$ then    $\sigma(T)\setminus\{0\}=\Pi^0(T)=E^0(T)=\sigma_{p}^0(T).$

 b) As $T$ is a Riesz operator, then either $\sigma_{d}(T)=\emptyset$ or $\sigma_{d}(T)=\{0\}.$   If   $\sigma_{d}(T)=\emptyset,$  then $T\in (gW_{e})$ as mentioned above. And in this case we have $\sigma(T)\setminus\{0\}=\Pi^0(T)=E^0(T)=\sigma_{p}^0(T)$ and $\sigma(T)=\Pi(T)=E(T)=\sigma_{p}(T).$

  If $\sigma_{d}(T)=\{0\},$ then $\Pi(T)=\Pi^0(T)=\sigma(T)\setminus\{0\}$ and $E(T)=\sigma_{p}(T).$ Hence   $T \in (gW_{e}) \Longleftrightarrow 0 \not\in \sigma_{p}(T).$ So if $T\in (gW_{e}),$ then $\sigma(T)\setminus\{0\}=\Pi(T)=\Pi^0(T)=E(T)=E^0(T)=\sigma_{p}(T)=\sigma_{p}^0(T).$\\
 Case II: $\sigma(T)$ is  infinite.\\ In this case, $\mbox{acc}\,\sigma(T)=\{0\}$ and since $\mbox{acc}\,\sigma(T)\subset\sigma_{d}(T),$ it then follows that $\sigma_{d}(T)=\sigma_{b}(T)=\{0\}.$ Hence $\sigma(T)\setminus\{0\}=\Pi(T)=\Pi^0(T)=E(T)=E^0(T).$ Thus $T\in  (gW_{e})\cap  (W_{e}).$
\end{ex}

 We prove in the next proposition that the class $(W_{e})$ (resp., $(gW_{e})$)  is included in  the class $(W)$ (resp., $(gW)$). Moreover,  the right shift operator $R$ defined above shows that these  inclusions are proper.  Indeed, $\sigma(R)=\sigma_{bw}(R)=\sigma_{w}(R)=D(0, 1),$ $\sigma_{bf}(R)=\sigma_{e}(R)=C(0, 1)$ and $E(R)=E^0(R)=\emptyset.$ This proves that $R\in (gW)$ (and so $R\in (W)),$ but $R\not\in (W_{e})\cup (gW_{e}).$
 
\begin{prop}\label{thm6} For  $T\in L(X),$ the following statements hold.\\
(i) $T\in (W_{e})$ if and only if $T\in (W)$ and $\sigma_{e}(T)=\sigma_{w}(T).$\\
(ii) $T\in (gW_{e})$ if and only if $T\in (gW)$ and  $\sigma_{e}(T)=\sigma_{w}(T).$
\end{prop}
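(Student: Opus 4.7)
My plan is to prove each equivalence by a squeeze that extracts both conclusions from the defining identity at once. The key observation, used in both parts, is that on the Fredholm resolvent set one has $E^0(T)\subset\Pi^0(T)$ (and on the B-Fredholm resolvent set, $E(T)\subset\Pi(T)$): if $\lambda\in E^0(T)$ and $T-\lambda I$ is Fredholm, then the isolation of $\lambda$ in $\sigma(T)$ gives SVEP for both $T$ and $T^*$ at $\lambda$, and since the implications $(A)$ and $(B)$ recalled in the introduction are equivalences on semi-B-Fredholm operators, we obtain $p(T-\lambda I)=q(T-\lambda I)<\infty$; combined with $\alpha(T-\lambda I)<\infty$ this places $\lambda$ in $\Pi^0(T)$. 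The same argument with B-Fredholm in place of Fredholm delivers the parallel inclusion $E(T)\cap\Delta_e^g(T)\subset\Pi(T)$.

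For (i), assume $T\in(W_e)$, so $\Delta_e(T)=E^0(T)$. The always-valid inclusions $\Delta(T)\subset\Delta_e(T)$ (coming from $\sigma_e(T)\subset\sigma_w(T)$) and $\Pi^0(T)\subset\Delta(T)$, together with the observation above applied inside $\Delta_e(T)$, produce the chain
\[
\Delta(T)\subset\Delta_e(T)=E^0(T)\subset\Pi^0(T)\subset\Delta(T),
\]
which forces equality throughout. This simultaneously gives $\Delta(T)=E^0(T)$, i.e.\ $T\in(W)$, and $\Delta(T)=\Delta_e(T)$, i.e.\ $\sigma_w(T)=\sigma_e(T)$. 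Conversely, if $T\in(W)$ and $\sigma_e(T)=\sigma_w(T)$, then $\Delta_e(T)=\Delta(T)=E^0(T)$, so $T\in(W_e)$.

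For (ii) the same template applies almost verbatim, with Lemma \ref{lem1}(i) converting between $\sigma_e(T)=\sigma_w(T)$ and $\sigma_{bf}(T)=\sigma_{bw}(T)$. Starting from $\Delta_e^g(T)=E(T)$, the inclusions $\Delta^g(T)\subset\Delta_e^g(T)$ and $\Pi(T)\subset\Delta^g(T)$, combined with $E(T)\subset\Pi(T)$ from the main observation, squeeze the analogous chain and yield $\Delta^g(T)=\Delta_e^g(T)=E(T)$. This gives $T\in(gW)$ and $\sigma_{bf}(T)=\sigma_{bw}(T)$, which by Lemma \ref{lem1}(i) is exactly $\sigma_e(T)=\sigma_w(T)$; the reverse implication is the same substitution as in (i). I do not expect a substantive obstacle: the only non-formal step is the lifting $E^0\subset\Pi^0$ (resp.\ $E\subset\Pi$) inside the Fredholm (resp.\ B-Fredholm) resolvent set, and that step rests on the well-known equivalences $(A)$, $(B)$ for semi-B-Fredholm operators, which in turn are an immediate consequence of Corollaries \ref{corollary0}--\ref{corollary1}.
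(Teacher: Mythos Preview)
Your proof is correct and follows essentially the same squeeze as the paper: the paper also sandwiches via $\Delta_e(T)\cap E^0(T)\subset\Delta(T)\subset\Delta_e(T)$ (and the analogous chain with $\Delta^g$, $\Delta_e^g$, $E$), then reads off both conclusions at once. The only minor difference is that you justify $\Delta_e(T)\cap E^0(T)\subset\Delta(T)$ by passing through $\Pi^0(T)$ via SVEP and the equivalences $(A)$, $(B)$, whereas the paper treats this inclusion as immediate (the shortest route being that an isolated spectral point at which $T-\lambda I$ is Fredholm has index zero by continuity of the index); your detour is harmless and in fact yields the slightly stronger intermediate fact $E^0(T)=\Pi^0(T)$.
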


\begin{proof} (i) Without restriction on $T,$ it is easy to verify that   $\Delta_{e}(T)\cap E^0(T)\subset \Delta(T)\subset\Delta_{e}(T).$ Since $T\in(W_{e}),$ then $\Delta_{e}(T)=\Delta(T)=E^0(T).$ So   $T\in (W)$  and $\sigma_{e}(T)=\sigma_{w}(T).$ The converse is clear.\\
(ii) Similarly, we have always   $\Delta_{e}^g(T)\cap E(T)\subset \Delta^g(T)\subset\Delta_{e}^g(T).$ As $T\in(gW_{e}),$ then $\Delta_{e}^g(T)=\Delta^g(T)=E(T).$ So  $T\in (gW)$  and $\sigma_{bf}(T)=\sigma_{bw}(T).$ The converse is obvious.
\end{proof}

From Lemma \ref{lem1} and  \cite[Theorem 2.9]{berkani1}, we obtain  immediately the following corollary. Note that the operator $U$ considered in Example \ref{ex3}, shows that   the class  $(W_{e})$ contains the class  $(gW_{e})$ as a proper subclass. Note that $\Pi(U)=\emptyset.$
\begin{cor}\label{cor3} Let $T\in L(X).$ Then
   $T\in (gW_{e})$ if and only if  $T\in (W_{e})$ and $E(T)=\Pi(T).$
\end{cor}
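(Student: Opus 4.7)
The plan is to derive this corollary by chaining together three already-established results: Proposition \ref{thm6}, which decomposes membership in $(W_e)$ and $(gW_e)$ into the corresponding ``classical'' class plus the identity $\sigma_e(T)=\sigma_w(T)$; the cited \cite[Theorem 2.9]{berkani1}, which states that $T\in(gW)$ iff $T\in(W)$ and $E(T)=\Pi(T)$; and (implicitly, through Proposition \ref{thm6}) Lemma \ref{lem1}(i). No genuinely new argument is required; the whole content is a bookkeeping exercise.

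For the forward direction, I would assume $T\in(gW_e).$ Proposition \ref{thm6}(ii) gives $T\in(gW)$ together with $\sigma_e(T)=\sigma_w(T).$ Invoking \cite[Theorem 2.9]{berkani1} on $T\in(gW)$ yields the two facts $T\in(W)$ and $E(T)=\Pi(T).$ Combining $T\in(W)$ with $\sigma_e(T)=\sigma_w(T)$ and applying Proposition \ref{thm6}(i) in the reverse direction gives $T\in(W_e),$ so both asserted conclusions hold.

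For the converse, I would assume $T\in(W_e)$ and $E(T)=\Pi(T).$ Proposition \ref{thm6}(i) upgrades the first hypothesis to $T\in(W)$ together with $\sigma_e(T)=\sigma_w(T).$ Feeding $T\in(W)$ and $E(T)=\Pi(T)$ into \cite[Theorem 2.9]{berkani1} delivers $T\in(gW).$ A final application of Proposition \ref{thm6}(ii), using the already-secured equality $\sigma_e(T)=\sigma_w(T),$ promotes $T\in(gW)$ to $T\in(gW_e).$

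I do not expect any real obstacle: the only point that requires minor care is to remember that Proposition \ref{thm6} is used in both directions, once to \emph{strip} the $(W_e)$ (resp.\ $(gW_e)$) hypothesis down to $(W)$ (resp.\ $(gW)$) plus $\sigma_e=\sigma_w,$ and once to \emph{reconstitute} $(W_e)$ (resp.\ $(gW_e)$) from those two ingredients. The role of \cite[Theorem 2.9]{berkani1} is symmetric in the same way, transferring between $(gW)$ and the pair $\{(W),\,E(T)=\Pi(T)\},$ so the overall proof is a short two-paragraph verification.
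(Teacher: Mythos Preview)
Your proof is correct and follows essentially the same route as the paper's own argument, which is the one-line remark ``From Lemma \ref{lem1} and \cite[Theorem 2.9]{berkani1}'' preceding the corollary. The only cosmetic difference is that you invoke Proposition \ref{thm6} directly, whereas the paper cites Lemma \ref{lem1}; since Proposition \ref{thm6}(ii) is itself obtained via Lemma \ref{lem1}(i), the two formulations are interchangeable.
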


We give in the following proposition, the relationship between the class    $(W_{e})$ [resp., $(gW_{e})$] and  the class $(B_{e})$ [resp., $(gB_{e})$]. And we examine these classes in a particular case of polaroid operators. Recall that an operator $T\in L(X)$  is said to be polaroid (resp., isoloid)  if $\mbox{iso}\,\sigma(T)=\Pi(T)$ (resp., $\mbox{iso}\,\sigma(T)=E(T)$).

\begin{prop}\label{prop3} Let $T\in L(X).$ We have  the following statements.\\
(i) $T\in (W_{e})$ if and only if $T\in (B_{e})$ and $\Pi^0(T)=E^0(T).$\\
(ii)  $T\in (gW_{e})$ if and only if $T\in (B_{e})$ and  $\Pi(T)=E(T).$\\
(iii) If $T$ is a polaroid operator,  then 
 $T\in(W_{e})  \Longleftrightarrow T\in (gW_{e}) \Longleftrightarrow T\in (B_{e}).$
\end{prop}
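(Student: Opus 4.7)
The plan is to prove (i) and (ii) by direct comparison of the defining equalities, and then derive (iii) as an easy consequence of the polaroid hypothesis. The unifying observation I will use throughout is already implicit in the paper: if $\lambda$ is an isolated point of $\sigma(T)$ and $T-\lambda I$ is Fredholm (respectively, B-Fredholm), then $\lambda\in\Pi^0(T)$ (respectively, $\lambda\in\Pi(T)$). This comes from combining the fact that $T$ and $T^*$ have the SVEP at every isolated point of the spectrum with the implications $(A)$ and $(B)$ stated just before Example \ref{ex1}, which the paper notes become equivalences in the semi-B-Fredholm setting.

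For (i), the inclusion $\Pi^0(T)\subseteq E^0(T)$ always holds. Assuming $T\in (W_e)$ and picking $\lambda\in\Delta_e(T)=E^0(T)$, the point $\lambda$ is isolated in $\sigma(T)$ while $T-\lambda I$ is Fredholm, hence by the observation above $\lambda\in\Pi^0(T)$. Combined with the always-true inclusion $\Pi^0(T)\subseteq \Delta_e(T)$, this gives $\Delta_e(T)=\Pi^0(T)=E^0(T)$, which is precisely $T\in (B_e)$ together with $\Pi^0(T)=E^0(T)$. The converse is immediate by substitution.

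For (ii), the argument is parallel but uses B-Fredholm theory. Starting from $T\in (gW_e)$ and $\lambda\in\Delta_e^g(T)=E(T)$, the point $\lambda$ is isolated in $\sigma(T)$ and $T-\lambda I$ is B-Fredholm; the equivalences stated for semi-B-Fredholm operators yield $p(T-\lambda I)<\infty$ and $q(T-\lambda I)<\infty$, so $\lambda\in\Pi(T)$. Hence $\Delta_e^g(T)=\Pi(T)=E(T)$, which says $T\in (gB_e)$ and $E(T)=\Pi(T)$. Since by Theorem \ref{thm1} the class $(gB_e)$ coincides with $(B_e)$, this is the stated characterization, and the reverse implication is again trivial.

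For (iii), assume $T$ is polaroid. The inclusions $\Pi(T)\subseteq E(T)\subseteq \mbox{iso}\,\sigma(T)=\Pi(T)$ force $\Pi(T)=E(T)$, and intersecting with $\sigma_p^0(T)$ yields $\Pi^0(T)=E^0(T)$ as well. Hence the side conditions in (i) and (ii) hold automatically, and the three classes $(W_e)$, $(gW_e)$, $(B_e)$ coincide on polaroid operators. The only nontrivial ingredient throughout is the ``isolated plus (B-)Fredholm implies pole'' step; everything else is bookkeeping with the defining equalities and the identity $(B_e)=(gB_e)$ from Theorem \ref{thm1}.
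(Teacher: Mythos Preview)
Your proof is correct and follows essentially the same approach as the paper. The paper's argument for (i) is the one-line chain $\Delta_e(T)=E^0(T)=\Delta_e(T)\cap E^0(T)=\Pi^0(T)$, relying implicitly on the general identity $\Delta_e(T)\cap E^0(T)=\Pi^0(T)$; you have simply unpacked that identity via the SVEP equivalences $(A)$ and $(B)$, and likewise for (ii) with the explicit appeal to Theorem~\ref{thm1} to pass from $(gB_e)$ to $(B_e)$, and (iii) is identical to the paper's.
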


\begin{proof}
(i) If $T\in(W_{e}),$ then $\Delta_{e}(T)=E^0(T)=\Delta_{e}(T)\cap E^0(T)=\Pi^0(T).$  The converse  is obvious.\\
(ii) Goes similarly with (i).\\
(iii) Remark that if $T$ is a  polaroid operator then  $E(T)=\Pi(T),$ and so $E^0(T)=\Pi^0(T).$ Thus  the desired equivalences.
\end{proof}

\begin{rema} The operator $S$ defined in  Example \ref{ex3}, shows that we do not expect that an operator belonging to the  class $(B_ {e}),$  belongs to the  class $(W_ {e})$ or $ (gW_ {e}).$ Note that $E^0(S)=\{0\}\neq\Pi^0(S)=\emptyset$ and consequently, $E(S)\neq \Pi(S).$ So the class  $(B_{e})$ contains the class  $(W_ {e})$ as a proper subclass which contains the class  $(gW_ {e})$ as a proper subclass.
\end{rema}
\begin{prop}\label{prop4}
Let $T\in L(X)$ be an isoloid operator.  The following statements hold.\\
 (i)   $T\in(gW_ {e})$  if and only if   $f(T)\in (gW_ {e})$ for every    $f\in\mbox{\rm Hol}(\sigma(T)).$\\
 (ii)  $T\in(W_ {e})$   if and only if   $f(T)\in (W_ {e})$ for every    $f\in\mbox{\rm Hol}(\sigma(T)).$
\end{prop}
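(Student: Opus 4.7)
\medskip\noindent\textbf{Proof plan.} In both parts the reverse implication is immediate on taking $f$ to be the identity function, so I focus on the forward direction. My strategy is to reduce each statement to invariance results already encoded in this paper — Proposition \ref{thm6} for part (ii) and Proposition \ref{prop3}(iii) for part (i) — combined with the spectral mapping theorem for $\sigma_{e}$ and $\sigma_{b}$, and classical preservation properties under the analytic functional calculus.

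For part (ii), I use Proposition \ref{thm6}(i), which says $T\in(W_{e})$ is equivalent to $T\in(W)$ together with $\sigma_{e}(T)=\sigma_{w}(T).$ Since $T$ is isoloid and $T\in(W)$, a classical theorem (Oberai) gives $f(T)\in(W)$ for every $f\in\mbox{Hol}(\sigma(T)).$ It remains to transfer the identity $\sigma_{e}(T)=\sigma_{w}(T)$ to $f(T).$ The essential spectrum satisfies the spectral mapping theorem unconditionally, while $\sigma_{w}(f(T))\subset f(\sigma_{w}(T))$ always holds and $\sigma_{e}(f(T))\subset\sigma_{w}(f(T))$ is automatic. Combining these with $\sigma_{e}(T)=\sigma_{w}(T)$ gives
$$\sigma_{e}(f(T))=f(\sigma_{e}(T))=f(\sigma_{w}(T))\supset\sigma_{w}(f(T))\supset\sigma_{e}(f(T)),$$
forcing equality throughout. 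Applying Proposition \ref{thm6}(i) to $f(T)$ then yields $f(T)\in(W_{e}).$

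For part (i), the key observation is that isoloidness upgrades $(gW_{e})$ to the polaroid property. Indeed, Corollary \ref{cor3} gives $E(T)=\Pi(T)$, while isoloidness gives $E(T)=\mbox{iso}\,\sigma(T)$; together $\mbox{iso}\,\sigma(T)=\Pi(T)$, so $T$ is polaroid. The polaroid property is preserved under the analytic functional calculus, so $f(T)$ is polaroid as well. Now Proposition \ref{prop3}(iii) reduces the problem: $T\in(gW_{e})$ is equivalent to $T\in(B_{e})$, i.e.\ $\sigma_{e}(T)=\sigma_{b}(T).$ Applying the spectral mapping theorem to both $\sigma_{e}$ and $\sigma_{b}$ (both valid on all of $\mbox{Hol}(\sigma(T))$) gives $\sigma_{e}(f(T))=f(\sigma_{e}(T))=f(\sigma_{b}(T))=\sigma_{b}(f(T))$, so $f(T)\in(B_{e}).$ Since $f(T)$ is polaroid, Proposition \ref{prop3}(iii) again yields $f(T)\in(gW_{e}).$

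\medskip\noindent\textbf{Main obstacle.} The two external ingredients — Oberai's theorem that $(W)$ is preserved under the functional calculus for isoloid operators, and the stability of the polaroid property under $f\in\mbox{Hol}(\sigma(T))$ — are what carry the weight of the argument; both are standard and available in \cite{aiena1}, but the proof should cite them explicitly. A minor technical point is the treatment of $f$ that is constant on a connected component of $\sigma(T)$: if such functions are included, one may prefer either to reduce to $f\in\mbox{Hol}_{nc}(\sigma(T))$ and observe that a constant map $f\equiv c$ produces $f(T)=cI$, which trivially lies in every class under consideration, or to check the spectral mapping identities directly in that degenerate case.
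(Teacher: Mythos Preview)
Your argument is correct but follows a genuinely different route from the paper's.

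The paper proceeds directly from the defining identities. For (i), it writes $(gW_{e})$ as $\sigma(T)\setminus E(T)=\sigma_{bf}(T)$, applies $f$ to both sides, and then invokes two external results: the identity $f(\sigma(T)\setminus E(T))=\sigma(f(T))\setminus E(f(T))$ for isoloid operators from \cite{berkani-arroud}, and the spectral mapping theorem $f(\sigma_{bf}(T))=\sigma_{bf}(f(T))$ from \cite{berkani}. Part (ii) is handled analogously with $\sigma_{e}$ in place of $\sigma_{bf}$ and the lemma $f(\sigma(T)\setminus E^{0}(T))=\sigma(f(T))\setminus E^{0}(f(T))$ from \cite{aiena1}.

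Your approach instead decomposes the classes using the paper's own structural results: for (ii) you split $(W_{e})$ into $(W)$ plus $\sigma_{e}=\sigma_{w}$ via Proposition \ref{thm6}, transfer $(W)$ by Oberai's theorem and the spectral identity by a sandwich argument; for (i) you observe that isoloid together with $(gW_{e})$ forces polaroid, reduce $(gW_{e})$ to $(B_{e})$ via Proposition \ref{prop3}(iii), transfer $(B_{e})$ by the spectral mapping theorems for $\sigma_{e}$ and $\sigma_{b}$, and then use preservation of polaroidness to return to $(gW_{e})$. This is slightly longer but has the virtue of recycling the paper's internal characterizations and avoiding the spectral mapping theorem for $\sigma_{bf}$; the trade-off is that you import Oberai's theorem and the stability of the polaroid property, which the paper's direct argument does not need.
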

\begin{proof}
(i) $\Longrightarrow$)  Let $f\in\mbox{\rm Hol}(\sigma(T)).$ Since  $T \in (gW_ {e}),$  that's $\sigma(T)\setminus E(T)=\sigma_{bf}(T).$  So $f\left(\sigma(T)\setminus E(T)\right)=f\left(\sigma_{bf}(T)\right).$ Since $T$ is isoloid then from \cite[Lemma 2.9]{berkani-arroud}, we  have $f\left(\sigma(T)\setminus E(T)\right)=\sigma(f(T))\setminus E(f(T)).$ Hence $\sigma(f(T))\setminus E(f(T))=f\left(\sigma_{bf}(T)\right).$ By \cite[Theorem 3.4]{berkani}, we have $f\left(\sigma_{bf}(T)\right)=\sigma_{bf}(f(T)).$  Consequently $f(T) \in  (gW_ {e}).$\\
 $\Longleftarrow$) By taking $f (\lambda) := \lambda,$ we deduce that $T\in (gW_ {e}).$\\
(ii) From \cite[Lemma 6.51]{aiena1} we have $f\left(\sigma(T)\setminus E^0(T)\right)=\sigma(f(T))\setminus E^0(f(T))$ and by \cite{Gramsch},  $f\left(\sigma_{e}(T)\right)=\sigma_{e}(f(T)).$ So the proof goes similarly with (i).
\end{proof}
It is proved in \cite[Theorem 2.6]{berkani-arroud} that every   hyponormal operator belong to class $(gW).$ But we cannot generalize  this result to the class  $(gW_{e}),$ even in the case   of  pure hyponormal operators. For this,  the right shift operator $R$ is pure hyponormal but it does not belong to the class $(gW_{e}).$ In the following proposition, we give  a necessary and sufficient condition for an hyponormal operator to belong to the class $(gW_{e}),$ and we specify in this case    its  several spectra.  Recall that  an operator $T$ is said to be hyponormal if $\|T^*x\|\leq\|Tx\|$ for all vectors $x.$ An hyponormal $T$ is called pure hyponormal  in case, the only subspace reducing $T$ on which $T$ is a normal operator is the zero subspace. 

\begin{prop}\label{propB}
Let $T$ be   an hyponormal operator acting on a Hilbert space $H.$  Then  $T \in (gW_{e})$ if and only if ${\mathcal S}(T^*)\subset \sigma_{e}(T)\cup(\sigma_{w}(T))^C.$ In addition if $T \in (B_{e}),$ then   $\sigma(T)=\sigma_{s}(T),$ $\sigma_{uf}(T)=\sigma_{uw}(T)=\sigma_{ub}(T),$ $\sigma_{ubf}(T)=\sigma_{ubw}(T)=\sigma_{ld}(T),$    $\sigma_{e}(T)=\sigma_{lf}(T)=\sigma_{w}(T)=\sigma_{lw}(T)=\sigma_{b}(T)=\sigma_{lb}(T)=\mbox{acc}\,\sigma(T)\cup\{\lambda \in \mbox{iso} \,\sigma(T) \text{ such that } \alpha(T-\lambda I) =\infty\}$ and $\sigma_{bf}(T)=\sigma_{lbf}(T)=\sigma_{bw}(T)=\sigma_{lbw}(T)=\sigma_{d}(T)=\sigma_{rd}(T)=\mbox{acc} \,\sigma(T).$  In particular, if $T$ is a pure hyponormal and $T \in (B_{e}),$ then $\sigma(T)=\sigma_{lbf}(T)$ and $\sigma_{a}(T)=\sigma_{ubf}(T).$
\end{prop}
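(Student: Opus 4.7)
The plan rests on four structural facts about a hyponormal $T$ on $H$, all standard: (a) $T$ has SVEP, so $\mathcal{S}(T)=\emptyset$; (b) the inclusion $\mathcal{N}(T-\lambda I)\subset\mathcal{N}(T^{*}-\bar\lambda I)$ gives $\alpha(T-\lambda I)\leq\beta(T-\lambda I)$ for every $\lambda\in\mathbb{C}$; (c) hyponormal operators are paranormal, so $H_{0}(T-\lambda I)=\mathcal{N}(T-\lambda I)$ for every $\lambda$, which forces $p(T-\lambda I)\leq 1$; and (d) at every $\lambda\in\mbox{iso}\,\sigma(T)$ the Riesz idempotent is a self-adjoint projection with range $\mathcal{N}(T-\lambda I)$, making $T$ polaroid (so $E(T)=\Pi(T)$) and isoloid.

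For the equivalence of the first part, I combine Proposition \ref{prop3}(ii) with (d) to reduce $T\in(gW_{e})$ to $T\in(B_{e})$. Theorem \ref{thm2} together with (a) then rewrites $T\in(B_{e})$ as $\mathcal{S}(T^{*})\subset\sigma_{e}(T)$. The implication $\mathcal{S}(T^{*})\subset\sigma_{e}(T)\Rightarrow\mathcal{S}(T^{*})\subset\sigma_{e}(T)\cup(\sigma_{w}(T))^{C}$ is trivial; for the converse I verify that $T^{*}$ has SVEP at every $\lambda\notin\sigma_{w}(T)$. Indeed, at such a $\lambda$ the operator $T-\lambda I$ is Fredholm of index zero, so $\alpha=\beta<\infty$, and (c) yields $q(T-\lambda I)=p(T-\lambda I)\leq 1$, after which implication $(B)$ closes the argument.

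For the second part I would organise the spectral equalities into three clusters. Using SVEP of $T$ alone: $\sigma(T)=\sigma_{s}(T)$ (surjectivity plus SVEP forces invertibility), and the semi-B-Fredholm converse of $(A)$ provides $\sigma_{uf}(T)=\sigma_{uw}(T)=\sigma_{ub}(T)$ and $\sigma_{ubf}(T)=\sigma_{ubw}(T)=\sigma_{ld}(T)$. Using (b): lower semi-(B-)Fredholm collapses to (B-)Fredholm, yielding $\sigma_{lf}(T)=\sigma_{e}(T)$, $\sigma_{lw}(T)=\sigma_{w}(T)$, and the B-variants. Feeding in $T\in(B_{e})$: the earlier characterisation of $(B_{e})$ together with Theorem \ref{thm1} furnishes $\sigma_{e}(T)=\sigma_{w}(T)=\sigma_{b}(T)$ and $\sigma_{bf}(T)=\sigma_{bw}(T)=\sigma_{d}(T)$, while the SVEP of $T^{*}$ on $(\sigma_{e}(T))^{C}$ (Theorem \ref{thm2}) supplies the missing descent to produce $\sigma_{lb}(T)=\sigma_{e}(T)$ and $\sigma_{rd}(T)=\sigma_{bf}(T)$. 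The polaroid property combined with Proposition \ref{prop1}(iii) then gives $\sigma_{d}(T)=\sigma(T)\setminus\mbox{iso}\,\sigma(T)=\mbox{acc}\,\sigma(T)$, and the description $\sigma_{e}(T)=\mbox{acc}\,\sigma(T)\cup\{\lambda\in\mbox{iso}\,\sigma(T):\alpha(T-\lambda I)=\infty\}$ follows because at a simple pole $\alpha=\beta$, so $\lambda\in\sigma_{e}(T)\cap\mbox{iso}\,\sigma(T)$ is equivalent to $\alpha(T-\lambda I)=\infty$.

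For the pure hyponormal addendum, any eigenspace of a hyponormal operator reduces $T$ and carries the scalar (hence normal) restriction $\lambda I$, so purity forces $\sigma_{p}(T)=\emptyset$ and $\alpha(T-\lambda I)=0$ for every $\lambda$; any isolated point of $\sigma(T)$ would then be a pole with trivial kernel and thus lie outside $\sigma(T)$, a contradiction, so $\sigma(T)=\mbox{acc}\,\sigma(T)$. This makes $\sigma(T)=\sigma_{e}(T)=\sigma_{bf}(T)=\sigma_{lbf}(T)$, and since $\alpha\equiv 0$ converts "left Drazin" into "closed range", one obtains $\sigma_{a}(T)=\sigma_{ld}(T)=\sigma_{ubf}(T)$. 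The main obstacle throughout is the B-Fredholm side of the second cluster: restricting to $\mathcal{R}((T-\lambda I)^{n})$ does not preserve hyponormality, so the naive use of (b) fails; I repair this by a punctured-neighbourhood argument via Corollaries \ref{corollary0} and \ref{corollary1}, which transfers lower semi-B-Fredholmness at $\lambda$ to lower semi-Fredholmness on $D(\lambda,\epsilon)\setminus\{\lambda\}$, upgrades it to Fredholmness via (b), and then uses Corollary \ref{corollary0}(vi) to pull upper semi-Fredholmness back to $\lambda$, yielding B-Fredholmness there.
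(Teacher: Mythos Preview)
Your argument for the equivalence follows the paper's skeleton exactly: reduce $(gW_{e})$ to $(B_{e})$ via polaroidness and Proposition~\ref{prop3}, then to $\mathcal{S}(T^{*})\subset\sigma_{e}(T)$ via Theorem~\ref{thm2} together with $\mathcal{S}(T)=\emptyset$. The only tactical difference lies in eliminating the extra piece $(\sigma_{w}(T))^{C}$: the paper invokes the known fact that a hyponormal operator satisfies Weyl's theorem, so that $\sigma(T)\setminus\sigma_{w}(T)=E^{0}(T)\subset\mbox{iso}\,\sigma(T)$ and $T^{*}$ automatically has SVEP there; you instead argue directly that at a Weyl point the ascent $p\le 1$ combined with index zero forces $q\le 1$, whence SVEP of $T^{*}$ by implication~$(B)$. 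Both are correct and essentially equivalent. For the list of spectral equalities and the pure-hyponormal addendum the paper gives no argument at all (``left to the reader as a simple exercise''), so your organised outline---including the punctured-neighbourhood repair via Corollaries~\ref{corollary0} and~\ref{corollary1} for the B-Fredholm identities---goes beyond what the paper supplies and is correct; note, incidentally, that the repair can be shortened by observing that $p(T-\lambda I)\le 1$ already forces $\mathcal{N}(T-\lambda I)\cap\mathcal{R}(T-\lambda I)=\{0\}$, so the restriction $T_{[n]}$ is injective for $n\ge 1$ and lower semi-B-Fredholm upgrades to B-Fredholm directly.
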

\begin{proof}
Suppose that ${\mathcal S}(T^*)\subset \sigma_{e}(T)\cup(\sigma_{w}(T))^C.$ As $T$ is    hyponormal, then $T \in (W).$ So ${\mathcal S}(T^*)\subset \sigma_{e}(T).$ Moreover,  it is well known that ${\mathcal S}(T)=\emptyset.$  Thus ${\mathcal S}(T \oplus T^*)\subset  \sigma_{e}(T).$ It follows from Theorem \ref{thm2} that $T \in (B_{e}).$ As $T$ is polaroid then  by  Proposition \ref{prop3},  $T \in (gW_{e}).$ The converse is obvious. The rest of the proof is left to the reader as a simple exercise.
\end{proof}
 As an applications of Proposition \ref{propB}, we find  the result of Conway given in    \cite[Chapter XI, Proposition  4.6]{conway} which proves that  every   normal operator belong to class $(W_{e}),$   and    the result of Berkani  given in \cite[Theorem  4.5]{berkani0}.
\begin{cor} If $N$ is a  normal operator acting on a Hilbert space $H$  then  $N \in (gW_{e}).$
 Moreover, we have   $\sigma(N)=\sigma_{ap}(N),$   $\sigma_{e}(N)=\sigma_{uf}(N)=\mbox{acc}\,\sigma(T)\cup\{\lambda \in \mbox{iso} \,\sigma(T) \text{ such that } \alpha(T-\lambda I) =\infty\}$ and $\sigma_{bf}(N)=\sigma_{ubf}(N)=\mbox{acc}\,\sigma(N)=\{\lambda \text{ such that } \R(\lambda I - T) \text{ is not closed}\}.$ 
\end{cor}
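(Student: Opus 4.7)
The plan is to apply Proposition \ref{propB} to both $N$ and $N^*$, exploiting the fact that normality makes the adjoint also normal and hence also hyponormal. First, to show $N \in (gW_{e})$, I would verify the hypothesis of Proposition \ref{propB}. Since $N$ is normal, so is $N^*$, and both are in particular hyponormal. A standard consequence of hyponormality is that the SVEP holds everywhere on $\mathbb{C}$; applied to $N^*$ this yields ${\mathcal S}(N^*) = \emptyset$, which is vacuously contained in $\sigma_{e}(N) \cup (\sigma_{w}(N))^C$. Proposition \ref{propB} then gives $N \in (gW_{e})$ directly. The chain of inclusions $(gW_{e}) \subset (W_{e}) \subset (B_{e})$, furnished by Corollary \ref{cor3} and Proposition \ref{prop3}(i), implies $N \in (B_{e})$, which is what is needed to activate the second half of Proposition \ref{propB}.

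The spectral equalities for $N$ are then essentially a transcription. Applying the conclusions of Proposition \ref{propB} directly to $N$, we read off $\sigma_{e}(N) = \sigma_{uf}(N) = \mbox{acc}\,\sigma(N) \cup \{\lambda \in \mbox{iso}\,\sigma(N) : \alpha(N - \lambda I) = \infty\}$ and $\sigma_{bf}(N) = \sigma_{ubf}(N) = \mbox{acc}\,\sigma(N)$. For the equality $\sigma(N) = \sigma_{ap}(N)$ (which is not a direct consequence of Proposition \ref{propB}, as that proposition yields only $\sigma = \sigma_{s}$), I would run Proposition \ref{propB} a second time, now on $N^*$, noting that $N^* \in (B_{e})$ by Remark \ref{rema0}. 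This yields $\sigma(N^*) = \sigma_{s}(N^*)$, and passing to the adjoint gives $\sigma(N) = \sigma_{ap}(N)$.

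The only step that falls outside the paper's existing machinery is the identification $\mbox{acc}\,\sigma(N) = \{\lambda : \R(\lambda I - N) \text{ is not closed}\}$, which I expect to be the main (though small) obstacle. The cleanest route is via the continuous functional calculus for normal operators: the spectrum of the positive operator $(N - \lambda I)^*(N - \lambda I)$ equals $\{|\mu - \lambda|^2 : \mu \in \sigma(N)\}$, so $0$ is either absent from or isolated in this spectrum precisely when $\lambda \notin \mbox{acc}\,\sigma(N)$; and the latter condition is equivalent to $\R(N - \lambda I)$ being closed by the standard range-closure criterion for positive operators. Assembling these three pieces completes the proof.
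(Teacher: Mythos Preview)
Your proposal is correct and follows the paper's intended route: the corollary is stated as a direct application of Proposition \ref{propB}, and that is exactly what you do, supplying the details the paper omits (including the closed-range characterization via the functional calculus, which the paper does not spell out).

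One small wrinkle in your write-up: the equalities $\sigma_{e}(N)=\sigma_{uf}(N)$ and $\sigma_{bf}(N)=\sigma_{ubf}(N)$ are \emph{not} literally among the conclusions of Proposition \ref{propB} applied to $N$ alone---that proposition gives two separate chains, one through $\sigma_{uf}$ and one through $\sigma_{e}=\sigma_{lf}$, without linking them. You already have the fix at hand: the second pass through Proposition \ref{propB} on $N^{*}$ (which you invoke for $\sigma=\sigma_{ap}$) yields $\sigma_{e}(N^{*})=\sigma_{lf}(N^{*})$, and duality turns this into $\sigma_{e}(N)=\sigma_{uf}(N)$; the B-Fredholm equality follows the same way. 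Alternatively, the elementary fact $\alpha(N-\lambda I)=\beta(N-\lambda I)$ for normal $N$ collapses upper and lower semi-Fredholm at once. Either way, just mention that the duality argument also handles these equalities, not only $\sigma=\sigma_{ap}$.
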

\begin{rema} Remark that there exist  hyponormal operators belonging to the class $(gW_{e})$ which are not normal. Let  $N$ be a diagonalisable normal operator all  of whose eigenvalues have infinite multiplicity and such that these eigenvalues are dense in $D(0, 1).$ We consider the operator $T=R\oplus N.$  Then $T$ is an hyponormal   which is  not normal. Moreover,  it is easy to  get ${\mathcal S}(T^*)\subset  \sigma(T)=\sigma_{e}(T)=D(0, 1).$ 
\end{rema}

\noindent {\bf{Conclusion}:}

As  conclusion, we  give a summary of the results obtained in the two preceding parts of
this paper. In the following diagram,
   the arrows signify the relation of inclusion between
   the class $(W),$ the class $(gW),$ the class $(B)$   and various classes introduced and studied in this paper.
 The numbers near the arrows are references to
the results obtained in the present paper (numbers without brackets) or to
the bibliography therein (the numbers in square brackets).

\begin{center}
 \vspace{5pt} \small{
\vbox{
\[
\begin{CD}T\in(W)@<\mbox{{\scriptsize\ref{thm6}}}<<T\in(W_e)
@>\mbox{{\scriptsize\ref{prop3}}}>>T\in(B_e)@>\mbox{{\scriptsize\ref{propB}}}>>T\in(B)\\
@AA\mbox{{\scriptsize\cite{Berkani-koliha}}}A@AA\mbox{{\scriptsize\ref{cor3}}}A\,\,\,\,\,\,\,\,\,\,\Updownarrow\mbox{{\scriptsize\ref{thm1}}}@.
\,\,\,\,\,\,\,\,\,\,\Updownarrow\mbox{{\scriptsize\cite{amouch-zguitti}}}\\
T\in(gW)@<<\mbox{{\scriptsize\ref{thm6}}}<T\in(gW_e)@>>\mbox{{\scriptsize\ref{prop3}}}>T\in(gB_e)@>>\mbox{{\scriptsize\ref{propB}}}>T\in(gB)
\end{CD}
\]}}
\end{center}

Moreover, counterexamples were given to show that the reverse of  each implication  (presented with number without bracket) in
the diagram is not true. Nonetheless, it was proved that under some extra assumptions, these
implications are equivalences.

\goodbreak
{\small \noindent Zakariae Aznay,\\  Laboratory (L.A.N.O), Department of Mathematics,\\Faculty of Science, Mohammed I University,\\  Oujda 60000 Morocco.\\
aznay.zakariae@ump.ac.ma\\

 \noindent Hassan  Zariouh,\newline Department of
Mathematics (CRMEFO),\newline
 \noindent and laboratory (L.A.N.O), Faculty of Science,\newline
  Mohammed I University, Oujda 60000 Morocco.\\
 \noindent h.zariouh@yahoo.fr

\end{document}